\documentclass[12pt]{article}
\usepackage{amsmath,amssymb,amsthm, amsfonts}
\usepackage{hyperref}
\usepackage{graphicx}
\usepackage{url}
\usepackage[mathlines]{lineno}
\usepackage{dsfont} % needed for double-struck 1
\usepackage[dvipsnames]{xcolor}

\usepackage{color}
\definecolor{red}{rgb}{1,0,0}

\definecolor{blue}{rgb}{0,0,1}

\definecolor{green}{rgb}{0,.6,0}

\usepackage{float}

\usepackage{tikz}

% Text dimensions
\setlength{\textheight}{8.8in}
\setlength{\textwidth}{6.5in}
\voffset = -14mm
\hoffset = -10mm

\newtheorem{thm}{Theorem}[section]
\newtheorem{cor}[thm]{Corollary}
\newtheorem{lem}[thm]{Lemma}
\newtheorem{prop}[thm]{Proposition}

\theoremstyle{definition}

\theoremstyle{definition}

\theoremstyle{definition}

\theoremstyle{definition}

\theoremstyle{definition}

 % subdiv symbol

\newcommand{\binf}{B^{[\infty]}}

% bold vectors

\newcommand{\Z}{\operatorname{Z}}

\newcommand{\bit}{\begin{itemize}}
\newcommand{\eit}{\end{itemize}}
\newcommand{\ben}{\begin{enumerate}}
\newcommand{\een}{\end{enumerate}}
\newcommand{\beq}{\begin{equation}}
\newcommand{\eeq}{\end{equation}}
\newcommand{\bea}{\begin{eqnarray}} 
\newcommand{\eea}{\end{eqnarray}}
\newcommand{\bpf}{\begin{proof}}
\newcommand{\epf}{\end{proof}\ms}
\newcommand{\bmt}{\begin{bmatrix}}
\newcommand{\emt}{\end{bmatrix}}
\newcommand{\ms}{\medskip}

\newcommand{\noi}{\noindent}

%%%%%%%%%% New Macros!! %%%%%%%%%%%%%%
\newcommand{\beqs}{\begin{equation*}} % * means no number
\newcommand{\eeqs}{\end{equation*}}
\newcommand{\beas}{\begin{eqnarray*}}
\newcommand{\eeas}{\end{eqnarray*}}

\newcommand{\sub}[1]{_{(#1)}}

\newcommand{\calf}{\mathcal{F}}

\newcommand{\fs}{\rightarrow}

%%%%%%%%%%%%%%%%%%%%%%%%%%%%%%%%%%%%%%%%%%

\title{Generalizations of leaky forcing}

\author{Joseph S. Alameda\thanks{Dept.~of Mathematics, Iowa State University, Ames, IA, USA (jalameda@iastate.edu)} \and 
J\"urgen Kritschgau\thanks{Dept.~of Mathematics, Iowa State University, Ames, IA, USA (jkritsch@iastate.edu) Research is supported by NSF grant DMS-1839918} 
\and  Michael Young\thanks{
 Dept.~of Mathematics, Iowa State University, Ames, IA 50011, USA (myoung@iastate.edu) Research is supported by NSF Grant DMS-1719841.} }

\begin{document}

\maketitle

%\linenumbers

\begin{abstract}
Vertex leaky forcing was recently introduced as a new variation of zero forcing in order to show how vertex leaks can disrupt the zero forcing process in a graph. An edge leak is an edge that is not allowed to be forced across during the zero forcing process.  The $\ell$-edge-leaky forcing number of a graph is the size of a smallest zero forcing set that can force the graph blue despite $\ell$ edge leaks. This paper contains an  analysis of the effect of edge leaks on the zero forcing process instead of vertex leaks. Furthermore,  specified $\ell$-leaky forcing is introduced. The main result is that  $\ell$-leaky forcing, $\ell$-edge-leaky forcing, and specified $\ell$-leaky forcing are equivalent. Furthermore, all of these different kinds of leaks can be mixed so that vertex leaks, edge leaks, and specified leaks are used. This mixed $\ell$-leaky forcing number is also the same as the (vertex) $\ell$-leaky forcing number.
\end{abstract}

\noi {\bf Keywords} zero forcing, leaky forcing, color change rule

\noi{\bf AMS subject classification} 05C57, 05C15, 05C50

\section{Introduction}

Zero forcing was introduced by the AIM Minimum Rank and Special Graphs Work Group in \cite{AIM}, in order to find upper bounds for the maximum nullity for the family of real symmetric matrices whose off-diagonal entries are described by a graph. The zero forcing process uses a set of blue vertices in a graph that color other vertices blue given a color change rule. Given a graph $G$ and a blue vertex $v\in V(G)$, if $v$ has one white neighbor $w$, then $v$ forces $w$ ($v$ colors $w$ blue). Formally, this process is known as the \textit{zero forcing color-change rule}. A \textit{zero forcing set} for $G$ is an initial set of blue vertices $B$ such that after iteratively and exhaustively applying the zero forcing color-change rule, every vertex in $G$ is blue. The \textit{zero forcing number} of a graph is the size of a minimum zero forcing set, and is denoted $\Z(G)$.

Zero forcing has shown up as a way to control quantum systems \cite{BG,S}. In fact, it was shown that if a set of vertices is a zero forcing set, then the associated quantum system is controllable \cite{BDHSY}. Another system that utilizes the zero forcing process is the electric power system. In \cite{HHHH}, Haynes et al. looked into the problem of monitoring an electric power system by placing as few measurement devices as possible. These applications of zero forcing lead to a natural questions: What if something breaks in the system? Is there a way to keep control? These questions were the main focus in \cite{AKWY} and \cite{DK}. In \cite{DK}, Dillman and Kenter introduced \textit{leaky forcing}, which is a variation on zero forcing that focuses on when vertices in a graph are not able to force. Leaky forcing uses the same color-change rule as zero forcing, but certain vertices are not allowed to perform forces. 

Given a graph $G$, a  \textit{vertex leak} (also referred to as a leak) is a vertex in $G$ that is not able to perform a force. An \textit{$\ell$-leaky forcing set}, is a zero forcing set such that for any set of $\ell$ vertex leaks in $G$, exhaustively applying the color-change rule results in every vertex in $G$ becoming blue. The \textit{$\ell$-leaky forcing number} for a graph $G$ is the size of a minimum $\ell$-leaky forcing set, and is denoted by $\Z\sub{\ell}(G)$. Notice that $\Z(G)=\Z\sub0(G)$. Furthermore, the notion of how resilient a graph is to leaks, and which structures need to be circumvented in a graph for a zero forcing set to be an $\ell$-leaky forcing set were explored in \cite{AKWY}. The notation used in this paper will follow the notation introduced in \cite{AKWY}.  The rest of this section contains results from \cite{AKWY} which are useful for exploring variations of leaky forcing.

In general, let $B\subseteq V(G)$ be an initial set of blue vertices in $G$. If vertex $u$ colors $v$ blue, then $u$ forces $v$ and denote it by $u\fs v$. The symbol $u\fs v$ is called a force. 
A \emph{set of forces $F$ of $B$ in $G$} is a set of forces such that there is a chronological ordering of the forces in $F$ where each force is valid and the whole graph turns blue. When the set $B$ is clear form context, $F$ may be referred to as a forcing process of $B$ or a forcing process $F$ (suppressing the reference to $B$).
Intuitively, $F$ represents the instructions for how $B$ can force $G$ blue, or provides a proof that $B$ is a zero forcing set. 
Implicitly, $F$ gives rise to discrete time steps in which sets of white vertices turn blue. A set $B'$ such that $B\subseteq B'\subseteq V(G)$ is \emph{obtained from $B$ using $F$} if $B$ can color $B'$ blue using only a subset of forces in a forcing process $F$. More generally, $B'$ is obtained from $B$ if there is some forcing process $F$ by which $B$ can color $B'$ blue.

The set $B^{[\infty]}$ is the set of blue vertices after the zero forcing rule has been exhaustively applied with $B$ as an initial blue set. Furthermore, $\binf_L$ will be determined after a set of leaks $L$ has been chosen. In particular, $\binf_L$ is the set of blue vertices obtained from $B$ with leaks $L$ after the zero forcing rule has been exhaustively applied.
% A sequence of forces $x_1\fs x_2, x_2\fs x_3,\dots, x_{k-1}\fs x_k$ is a \emph{forcing chain} and will be abbreviated by $x_1\fs x_2\fs x_3\fs\cdots\fs x_k$. 
%A forcing chain is \emph{maximal} in $F$ (and implicitly given $B$) if  $x_1$ is in $B$ and $x_k$ does not perform a force in $F$. 
% Note that $x_1=x_k$ is possible if $x_1\in B$ and $x_1$ does not perform a force in $F$.
Let $\calf (B)$ denote the set of all possible forces given a vertex set $B$. That is, $u\fs v\in \calf(B)$ if there exists a set of forces $F$ of $B$ in $G$ that contains $u\fs v$.  Given this notation, $B$ is an $\ell$-leaky forcing set if for every $L\subseteq V(G)$ with $|L|=\ell$ there exists a forcing process $F$ such that if $u\fs v\in F$, then $u\notin L$. 

Suppose $S\subseteq V(G)$ and $F$ is a forcing process. Let \[F(S)=\{ x\fs y\in F: y \notin S\}.\] By extension, \[F\setminus F(S)=\{ x\fs y\in F: y\in S\}.\] The following lemma proves that abandoning process $F$ to follow process $F'$ creates a new forcing process. 

\begin{lem}\label{switch}\cite{AKWY}
Let $B$ be a blue set in $G$ with forcing processes $F$ and $F'$. Then $(F\setminus F(B'))\cup F'(B')$ is a forcing process of $B$ for any $B'$ obtained from $B$ using $F.$
\end{lem}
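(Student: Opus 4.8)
The plan is to build an explicit valid chronological ordering of the force set $(F\setminus F(B'))\cup F'(B')$ by concatenating a chronological ordering of $F\setminus F(B')$ that takes $B$ to $B'$ with a chronological ordering of $F'(B')$ that takes $B'$ to $V(G)$.

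First I would record a small bookkeeping fact: in any forcing process a vertex that is already blue is never the target of a force (when $u\fs w$ is applied, $w$ must be a white neighbor of $u$), and no vertex is forced twice. In particular no force of $F$ has its target in $B$, so $F\setminus F(B')=\{x\fs y\in F:\ y\in B'\setminus B\}$. Since $B'$ is obtained from $B$ using $F$, the subset of $F$ witnessing this must force every vertex of $B'\setminus B$ and force no vertex outside $B'$, and because each vertex is forced at most once in $F$ this subset is exactly $F\setminus F(B')$. Hence $F\setminus F(B')$ has a chronological ordering, valid starting from $B$, whose final blue set is $B'$.

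The substantive step is to show that $F'(B')=\{x\fs y\in F':\ y\notin B'\}$, ordered as its forces appear within a fixed chronological ordering of $F'$, is a forcing process of $B'$. List the forces of $F'$ as $u_1\fs w_1,\dots,u_m\fs w_m$ in chronological order, let $B'_t=B\cup\{w_1,\dots,w_t\}$ be the blue set after $t$ steps of $F'$ from $B$ (so validity of $F'$ gives $u_t\in B'_{t-1}$ and $N(u_t)\setminus B'_{t-1}=\{w_t\}$ for each $t$, and $B'_m=V(G)$), and let $C_t$ be the blue set reached from $B'$ after applying those forces $u_s\fs w_s$ with $s\le t$ and $w_s\notin B'$. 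A routine induction gives $C_t=B'\cup B'_t$. It then follows that whenever $w_t\notin B'$ the force $u_t\fs w_t$ is valid at that stage: $u_t\in B'_{t-1}\subseteq C_{t-1}$, and from $N(u_t)\setminus B'_{t-1}=\{w_t\}$ we get $N(u_t)\setminus C_{t-1}=N(u_t)\setminus(B'\cup B'_{t-1})\subseteq\{w_t\}$, with equality since $w_t\notin B'$ and $w_t\notin B'_{t-1}$. As $C_m=B'\cup V(G)=V(G)$, this ordering turns $B'$ entirely blue.

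To finish, note that the targets of $F\setminus F(B')$ form the set $B'\setminus B$ and the targets of $F'(B')$ form the set $V(G)\setminus B'$, so these two force sets are disjoint and together they contain exactly one force targeting each vertex of $V(G)\setminus B$; thus $(F\setminus F(B'))\cup F'(B')$ is precisely the set of forces obtained by concatenating the two orderings above, which is valid from $B$, reaches $B'$, and then reaches $V(G)$. Hence it is a forcing process of $B$. I expect the one real obstacle to be the verification above that $F'(B')$ is a forcing process of $B'$ --- essentially a monotonicity statement, that deleting from $F'$ the forces whose targets are already blue in $B'$ leaves every remaining force valid --- and the identity $C_t=B'\cup B'_t$ is what makes this transparent; everything else is bookkeeping about which vertex is forced when.
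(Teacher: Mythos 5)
Your proof is correct: the identification of $F\setminus F(B')$ with the sub-process taking $B$ to $B'$, the induction showing $C_t=B'\cup B'_t$ (which is exactly the monotonicity needed to see that the surviving forces of $F'$ remain valid after the ``head start'' $B'$), and the concatenation argument are all sound. The paper itself only cites this lemma from \cite{AKWY} without reproducing a proof, but your argument is precisely the standard one matching the paper's informal description of ``abandoning process $F$ to follow process $F'$,'' so there is nothing to flag.
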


The next lemma shows that for any $(\ell-1)$-leaky forcing set $B$ and set of $\ell$ vertex leaks $L$, there exists a time when all $\ell$ leaks in $L$ are blue. Furthermore, there is also a time when all but one of the $\ell$ leaks in $L$ are blue. 

\begin{lem}\label{fail}\cite{AKWY}
If $B$ is an $(\ell-1)$-leaky forcing set and $L$ is a set of $k\geq \ell$ vertex leaks, then $|L\setminus B^{[\infty]}_L|\leq k-\ell$.
\end{lem}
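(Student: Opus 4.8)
The plan is to recast the desired inequality and then exploit the hypothesis on $B$. Write $L^\circ := L \cap \binf_L$ for the set of leaks that eventually turn blue. Since $|L|=k$, the inequality $|L \setminus \binf_L| \le k-\ell$ is exactly the assertion $|L^\circ| \ge \ell$: at least $\ell$ of the $k$ leaks must turn blue. So I would argue by contradiction, assume $|L^\circ| \le \ell-1$, and extract from this a leak set on which $B$ already fails, contradicting that $B$ is an $(\ell-1)$-leaky forcing set.

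The key step is the following: \emph{if $|L^\circ|\le \ell-1$ then $\binf_{L^\circ}=\binf_L$}, i.e.\ deleting from the leak set all the leaks that stayed white (the vertices of $L\setminus\binf_L$, none of which ever became blue anyway) does not help $B$ force any further. The inclusion $\binf_L\subseteq\binf_{L^\circ}$ is immediate, since a forcing sequence realizing $\binf_L$ uses no force $x\fs y$ with $x\in L$, hence none with $x$ in the smaller set $L^\circ$, so it is equally valid with leak set $L^\circ$. For the reverse inclusion I would check that $\binf_L$ is already stalled under leak set $L^\circ$: if some $v\in\binf_L\setminus L^\circ$ had exactly one white (relative to $\binf_L$) neighbour, then from $v\in\binf_L$ and $v\notin L^\circ=L\cap\binf_L$ we get $v\notin L$, so $v$ would be a non-leak blue vertex with a unique white neighbour, contradicting that $\binf_L$ is stalled under leak set $L$. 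Since $\binf_L$ is both reachable from $B$ with leak set $L^\circ$ and admits no further force under $L^\circ$, well-definedness (confluence) of the leaky-forcing derived set yields $\binf_{L^\circ}=\binf_L$.

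To conclude: if $|L^\circ|\le\ell-1$, then $B$ --- being an $(\ell-1)$-leaky forcing set, hence a $j$-leaky forcing set for every $j\le\ell-1$ (extend any set of $j$ leaks to a set of $\ell-1$ leaks, which is possible since $|V(G)|\ge k\ge\ell$, apply the $(\ell-1)$-leaky hypothesis, and restrict the resulting process) --- is in particular an $|L^\circ|$-leaky forcing set, so $\binf_{L^\circ}=V(G)$. Combined with the key step this forces $\binf_L=V(G)$, whence $L^\circ=L\cap\binf_L=L$ and $|L^\circ|=k\ge\ell$, contradicting $|L^\circ|\le\ell-1$. Therefore $|L^\circ|\ge\ell$ and $|L\setminus\binf_L|=k-|L^\circ|\le k-\ell$.

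I expect the delicate point to be the reverse inclusion in the key step --- passing from ``$\binf_L$ is a stalled, reachable configuration for leak set $L^\circ$'' to ``$\binf_L=\binf_{L^\circ}$'' --- which rests on the leaky-forcing closure being independent of the order of forces. If one prefers to avoid invoking that, it can be replaced by a hands-on argument in the spirit of Lemma~\ref{switch}: take a forcing process $F'$ of $B$ that avoids every force $x\fs y$ with $x\in L^\circ$ (it exists because $B$ is $|L^\circ|$-leaky), and consider the chronologically first force $u\fs v$ of $F'$ with $v\notin\binf_L$; all earlier forces of $F'$ land inside $\binf_L$, so the blue set just before $u\fs v$ is contained in $\binf_L$, forcing $u\in\binf_L$ and making $v$ the unique white neighbour of $u$ relative to $\binf_L$; since $\binf_L$ is stalled under leak set $L$ this forces $u\in L$, hence $u\in L\cap\binf_L=L^\circ$, contradicting the choice of $F'$ --- unless no such force exists, i.e.\ $\binf_L=V(G)$. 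The remaining ingredients --- monotonicity of leaky forcing in the leak set, and $(\ell-1)$-leaky $\Rightarrow$ $j$-leaky for $j\le\ell-1$ --- are routine and already implicit in \cite{AKWY}.
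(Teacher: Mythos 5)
Your proposal is correct and follows essentially the same route as the paper's own argument (compare the proofs of Lemma~\ref{edgefail}, Proposition~\ref{specfail}, and Lemma~\ref{mixfail}, which are this statement's templates): pass to $L^\circ=L\cap\binf_L=L\setminus(L\setminus\binf_L)$, observe that the discarded leaks never influenced the propagation so that $L^\circ$ alone already stalls $B$ at $\binf_L$, and contradict the $(\ell-1)$-leaky hypothesis. Your write-up simply makes rigorous, via the stalling/confluence argument for $\binf_{L^\circ}=\binf_L$, the step the paper states as ``these leaks did not change the zero forcing behavior of $B$.''
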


The previous two lemmas are used to prove Theorem \ref{setofforces}. The gist of the proof is to use a forcing process that turns all but one of the leaks blue. This is possible by Lemma \ref{fail}. At this point, the forcing process is abandoned for a process that will  completely force the graph despite the remaining leak. Switching forcing processes is justified by Lemma \ref{switch}.

\begin{thm}\label{setofforces}\cite{AKWY}
A set $B$ is an $\ell$-leaky forcing set if and only if $B$ is an $(\ell-1)$-leaky forcing set such that for every set of $\ell -1$ vertex leaks $L$ and $v\in V(G)\setminus B$ there exists $x\fs v , y\fs v\in \mathcal F_L(B)$ with $y\neq x$.
\end{thm}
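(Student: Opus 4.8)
The plan is to prove both directions of the equivalence. For the forward direction, suppose $B$ is an $\ell$-leaky forcing set. Then certainly $B$ is an $(\ell-1)$-leaky forcing set (any set of $\ell-1$ leaks can be padded to a set of $\ell$ leaks, or one simply observes that resilience to $\ell$ leaks implies resilience to fewer). Now fix a set $L$ of $\ell-1$ vertex leaks and a vertex $v \in V(G) \setminus B$. I want to produce two distinct forces $x \fs v$ and $y \fs v$ in $\calf_L(B)$. The idea: consider the set of leaks $L \cup \{u\}$ for a suitable neighbor $u$ of $v$. Since $B$ is $\ell$-leaky forcing, there is a forcing process $F$ avoiding all leaks in $L \cup \{u\}$; the force that colors $v$ in $F$ comes from some vertex $x \neq u$, and $x$ does not force across any leak in $L$, so $x \fs v \in \calf_L(B)$. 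To get a \emph{second} such force, I would apply this with a different choice of the extra leak $u$: specifically, once $x \fs v$ is identified, take the leak set $L \cup \{x\}$, obtaining a process avoiding $L \cup \{x\}$ in which $v$ is colored by some $y \neq x$, again with $y \fs v \in \calf_L(B)$. Since $y \neq x$, this gives the two required distinct forces.

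For the reverse direction, suppose $B$ is an $(\ell-1)$-leaky forcing set with the stated two-forces property, and let $L$ be any set of $\ell$ vertex leaks; I must exhibit a forcing process for $B$ avoiding $L$. By Lemma \ref{fail} applied with $k = \ell$, we have $|L \setminus B^{[\infty]}_L| \le \ell - \ell = 0$, so actually all leaks eventually turn blue under the leaky process with leak set $L$; but more usefully, applying the reasoning behind Lemma \ref{fail}, there is a time in a suitable forcing process at which all but one leak of $L$ is blue. Concretely, let $\{w\} = L \setminus L'$ where $L' \subseteq L$ has size $\ell - 1$, and run a forcing process $F$ respecting the $\ell - 1$ leaks in $L'$ until we reach the blue set $B'$ containing all of $L'$ but just before $w$ would be forced (if $w \notin B$; if $w \in B$ the argument is even easier since $w$ being a leak in $B$ only removes forcing power we can route around using the $(\ell-1)$-leaky property on $L'$).

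At this stage the only ``problematic'' leak is $w$. Here is where the two-forces hypothesis enters: in the process $F$, $w$ is about to be forced by some vertex, say $x \fs w$. By hypothesis applied to the leak set $L'$ and the vertex $w$, there exist two distinct forces $x \fs w, y \fs w \in \calf_{L'}(B)$; at least one of $x, y$ — say $y$ — is distinct from the actual forcer of $w$ used so far, and more to the point, since $|L'| = \ell - 1$ we may invoke Theorem-style switching: there is a forcing process $F'$ of $B$ respecting $L'$ that reaches $B'$ (all of $L'$ blue, $w$ still white) and then forces $w$ via a force $y \fs w$ with $y \notin L$. We then apply Lemma \ref{switch} to splice: $(F \setminus F(B')) \cup F'(B')$ is a forcing process of $B$, and by construction it respects all of $L$ — it respects $L'$ throughout, and the single force into $w$ uses $y \notin L$, while all forces after $w$ is colored need not be reconsidered for $w$ since $w$ is already blue and a blue leak blocks nothing about coloring $w$. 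One must check that $w$ itself, once blue, is never \emph{used} to force across... wait, $w$ being a leak means $w$ cannot force at all; so in the spliced process we additionally need every force to not originate at any vertex of $L$, including $w$. This is handled by choosing $F'$ from the start to be a process respecting $L$ \emph{on the tail after $B'$}, which exists precisely because after $B'$ the white vertices can be forced by an $(\ell-1)$-leaky-type argument — here is the subtle point requiring care.

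The main obstacle I anticipate is bookkeeping the switch cleanly: ensuring that the spliced process $(F \setminus F(B')) \cup F'(B')$ genuinely avoids \emph{all} $\ell$ leaks, not just $\ell - 1$ of them, which requires that the ``second forcer'' $y$ supplied by the hypothesis can be chosen to cohere with a full forcing process that also handles the remaining white vertices without using $w$ as a forcer. I expect this is resolved exactly as sketched in the paragraph preceding the theorem in the excerpt: use Lemma \ref{fail} to get all-but-one leak blue, then the two-forces condition guarantees the last leak $w$ is dispensable as a forcer for its unique out-neighbor-in-$F$, and Lemma \ref{switch} legitimizes abandoning $F$ for a process that finishes the job while treating $w$ as inert. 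The remaining details are the routine verification that the chronological ordering can be maintained across the splice.
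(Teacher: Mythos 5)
Your forward direction is essentially correct and is the standard argument: obtain one force $x\fs v\in\calf_L(B)$ from $(\ell-1)$-leakiness, then add $x$ itself as an $\ell$-th vertex leak and use $\ell$-leakiness to produce a process, still respecting $L$, in which $v$ is forced by some $y\neq x$. The reverse direction, however, has a genuine gap. You apply the two-forces hypothesis to the remaining leak $w$ itself, producing two distinct forces \emph{into} $w$. That addresses a non-problem: a vertex leak may be forced (colored blue) without any restriction --- the definition only forbids $w$ from \emph{performing} forces --- so the identity of the vertex that colors $w$ is irrelevant, and any forcer of $w$ in a process respecting $L'$ already lies outside $L$ automatically. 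The hypothesis must instead be applied to the vertex $v$ that $w$ is about to \emph{force}, and the splice point $B'$ must be taken just before $w$ performs its force in $F$, not just before $w$ is forced. Your closing paragraph gestures at exactly this (``$w$ is dispensable as a forcer for its unique out-neighbor-in-$F$''), but it contradicts the argument you actually wrote, and nothing in the body carries it out.

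Moreover, the step you yourself flag as ``the subtle point requiring care'' --- guaranteeing that $w$ performs no force in the spliced tail $F'(B')$ --- is the entire content of this direction, and your proposed fix (``choose $F'$ from the start to be a process respecting $L$ on the tail after $B'$'') assumes precisely what is being proved. With the corrected splice point the gap closes: at $B'$ the vertex $v$ is the \emph{unique} white neighbor of $w$, so any force performed by $w$ after $B'$ could only be $w\fs v$; since $F'$ contains $y\fs v$ with $y\neq w$ and each vertex is forced exactly once, $w\fs v\notin F'$, hence $w$ performs no force in $F'(B')$ and the spliced process respects all of $L$. Alternatively, one can follow the route the paper uses for the analogous edge statement (Theorem \ref{edgesetofforces}): use Lemma \ref{fail} to turn the leaks blue, delete $\ell-1$ of the now-blue (and therefore inert) vertex leaks, and reduce to the one-leak base case on the smaller graph, which must be established separately in the style of Theorem \ref{edgedouble}. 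Either repair works, but one of them has to be carried out; as written, the argument is circular at its crucial step and misdirects the one hypothesis that distinguishes $\ell$-leaky from $(\ell-1)$-leaky.
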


Edge-leaky forcing is introduced in Section \ref{edge}. The main result of this section is that the $\ell$-edge-leaky  forcing number is the same as the  $\ell$-leaky forcing. Section \ref{spec} introduces specified leaks, and shows that preventing a directional force is equivalent also equivalent to vertex leaky forcing. In Section \ref{mix},  vertex leaks, edge leaks, and specified leaks are mixed in the leak set without changing the underlying behavior of leaky forcing. Furthermore, in Section \ref{secind}, sets of leaks with a particular underlying structure are explored.  In general, analogs of Lemma \ref{fail} will be used to conclude that the condition in Theorem \ref{setofforces} applies for $\ell$-edge-leaky forcing and specified $\ell$-leaky forcing.

\section{On edge-leaky forcing}\label{edge}

A natural generalization of $\ell$-leaky forcing is to consider what happens when  forces are prohibited from passing over particular edges. An edge $xy$ is an \emph{edge leak} if neither $x\fs y$ nor $y\fs x$ are allowed. A set of blue vertices $B$ is an $\ell$-edge-leaky forcing set if $B$ can turn the whole graph $G$ blue given any set of $\ell$ edge leaks. Denote the $\ell$-edge-leaky forcing number of a graph $G$ by $\Z'\sub\ell(G)$.  Setting both $x$ and $y$ as vertex leaks is a strictly stronger constraint on the zero forcing process than setting $xy$ as an edge leak. However, setting $x$ as a vertex leak is not obviously as  strong as setting $xy$ as an edge leak, since setting $x$ as a vertex leak still allows $y\fs x$. This makes the following result somewhat surprising.

\begin{thm}\label{edgedouble}
A set $B$ is a  $1$-edge-leaky forcing set if and only if for all $v\in V(G)\setminus B$, there exists $x\fs v,y\fs v\in \calf(B)$ with $y\neq x$.
\end{thm}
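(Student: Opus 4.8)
The plan is to prove both directions by relating edge leaks to the two-force condition of Theorem~\ref{setofforces} (with $\ell = 1$), which characterizes $1$-(vertex-)leaky forcing sets. Notice that the stated condition — for every $v \in V(G)\setminus B$ there exist distinct $x \fs v$ and $y \fs v$ in $\calf(B)$ — is exactly the condition appearing in Theorem~\ref{setofforces} with $\ell = 1$ (so $L = \emptyset$ and $\calf_L(B) = \calf(B)$). Thus the theorem is really asserting that $B$ is a $1$-edge-leaky forcing set if and only if $B$ is a $1$-(vertex-)leaky forcing set.

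For the forward direction, I would argue contrapositively: suppose there is some $v \in V(G)\setminus B$ such that every force into $v$ in $\calf(B)$ comes from a single vertex $x$ (i.e.\ $x \fs v$ is the unique possible force coloring $v$). Then I claim the single edge leak $xv$ defeats $B$. Indeed, in any forcing process $F$, the vertex $v$ must eventually be forced, and by hypothesis the only force in $\calf(B)$ that colors $v$ is $x \fs v$; but that force is prohibited by the edge leak $xv$. Hence $v$ can never be colored, so $B$ is not a $1$-edge-leaky forcing set. (One should be slightly careful that "$\calf(B)$ is the set of all forces appearing in \emph{some} valid process," so if $x \fs v$ is the only element of $\calf(B)$ coloring $v$, no valid process whatsoever colors $v$ without using it.)

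For the reverse direction, assume the two-force condition holds; equivalently, by Theorem~\ref{setofforces}, $B$ is a $1$-leaky forcing set. Fix an arbitrary edge leak $ab$. I want to produce a forcing process that never forces across $ab$. The strategy mirrors the proof of Theorem~\ref{setofforces}: first run a process that handles the "vertex-leak version" of the obstruction, then switch processes via Lemma~\ref{switch}. Concretely, treat $a$ as a single vertex leak. Since $B$ is $1$-leaky, there is a process $F_a$ that colors all of $G$ with no force performed \emph{by} $a$; in particular $F_a$ never uses $a \fs b$. If $F_a$ also never uses $b \fs a$, we are done. Otherwise $b \fs a \in F_a$, which in particular means $a \notin B$ and there is a time in $F_a$ just before $a$ is colored; let $B'$ be the blue set obtained from $B$ using $F_a$ up to (but not including) the force $b \fs a$. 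Now treat $b$ as a single vertex leak: $B$ is $1$-leaky, so there is a process $F_b$ with no force performed by $b$, hence $F_b$ never uses $b \fs a$. The key subtlety is that I want to splice $F_b$ in \emph{after} $B'$, but $B'$ was obtained using $F_a$, not $F_b$. Here I would invoke Lemma~\ref{switch} with the roles set up so that $(F_a \setminus F_a(B')) \cup F_b(B')$ is a valid forcing process of $B$: the first part colors $B'$ without touching $a$ at all (and in particular avoids $ab$), and the spliced-in tail $F_b(B')$ finishes coloring $V(G)\setminus B'$ — in particular it colors $a$ — using only forces from $F_b$, which never performs a force by $b$, hence never uses $b \fs a$. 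It remains to check that this combined process also never uses $a \fs b$: the $F_a \setminus F_a(B')$ part is a subprocess of $F_a$ which performs no force by $a$, and the $F_b(B')$ part is a subprocess of $F_b$ and performs no force by $b$, so to be fully safe I should choose $F_b$ to be a process avoiding forces by \emph{both} $a$ and $b$. That requires $B$ to withstand the two vertex leaks $\{a,b\}$ — i.e.\ that $B$ be $2$-leaky — which is stronger than assumed, so I need to be more careful.

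The main obstacle, then, is exactly this last point: a naive "switch twice" argument seems to need $2$-leaky forcing. The resolution I expect to use is to invoke the characterization more cleverly. Since $B$ is $1$-leaky and the two-force condition holds, for the vertex $a$ (if $a \notin B$) there are two distinct forces $x \fs a, y \fs a \in \calf(B)$; at most one of these is $b \fs a$, so at least one, say $x \fs a$ with $x \ne b$, is available, and similarly there is a force into $b$ not equal to $a \fs b$. The cleanest route is: run a $1$-leaky process $F_a$ treating $a$ as the leak (no force by $a$, so $a\fs b$ never used), obtaining all of $G$ blue; the only possible offending force is $b \fs a$. If it occurs, back up to the blue set $B'$ just before it, then apply Lemma~\ref{switch} with a process $F'$ chosen as a $1$-leaky process for the leak $b$ — but applied to $B'$, not $B$. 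Since $B' \supseteq B$ and $B$ is $1$-leaky, $B'$ is also $1$-leaky (adding blue vertices cannot hurt), so $B'$ has a process $F'$ performing no force by $b$; moreover $a$ is already white in... actually $a$ is still white in $B'$, and $F'(B')$ must color $a$ using a force that is not $b \fs a$ and — since $a \notin$ any leak here — possibly using $a$ itself as a forcer, which is fine because $a$ is not one of the leaked endpoints of edge $ab$ once $a$ is being colored, not forcing. Wait: $a \fs b$ could reappear in $F'$. So once more I must ensure $F'$ avoids forcing by $a$ as well. Since $a$ is white in $B'$, it will be colored during $F'$; after that it may force. To forbid $a \fs b$ I can simply note that at the moment $a$ gets colored in $F'$, $b$ is already blue (because $b$ was blue in $B \subseteq B'$), so $a$ never has $b$ as a white neighbor and $a \fs b$ is automatically impossible in $F'$. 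That is the crucial observation that closes the gap without needing $2$-leaky forcing, and it is where I expect the real work of writing the proof to lie.
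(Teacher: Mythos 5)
Your proposal is correct in substance, and the core mechanism --- truncate a forcing process just before the offending force and splice in an alternative via Lemma~\ref{switch} --- is the same one the paper uses. The reverse direction is executed differently, though. The paper works directly from the two-force hypothesis: it takes an arbitrary forcing process $F$, stops at the blue set $B'$ reached just before $x\fs v$ would occur (so $x\in B'$, $v\notin B'$), picks a second process $F'$ of $B$ containing $y\fs v$ (which exists by the definition of $\calf(B)$), and applies Lemma~\ref{switch} verbatim; the force $v\fs x$ is excluded simply because $x\in B'$. You instead first convert the hypothesis into ``$B$ is a $1$-(vertex-)leaky forcing set'' via Theorem~\ref{setofforces}, run a process $F_a$ avoiding all forces by $a$, and then splice in a process of $B'$ avoiding all forces by $b$. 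This costs you two auxiliary facts the paper's route does not need: that a superset of a $1$-leaky forcing set is again $1$-leaky, and a variant of Lemma~\ref{switch} in which the second process is a forcing process of $B'$ rather than of $B$ --- both are true and easy, but would have to be proved if you wrote this up. One genuine (though easily repaired) slip: you justify ``$b$ is blue throughout $F'$'' by asserting $b\in B$, which need not hold; the correct reason is that $b$ is the tail of the force $b\fs a$ performed immediately after $B'$ is reached, so $b\in B'$, and hence $a\fs b$ cannot occur in a process starting from $B'$. With that fix your argument closes. Your forward direction is just the contrapositive of the paper's direct argument and is fine, modulo also disposing of the degenerate case in which $\calf(B)$ contains no force into $v$ at all (then $B$ is not even a zero forcing set).
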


\begin{proof}
Assume that $B$ is a $1$-edge-leaky forcing set. This implies that $B$ is a zero forcing set with forcing process $F$. Let $v\in V(G)\setminus B$ and $x\fs v\in F$. Since $B$ is a $1$-edge-leaky forcing set, there exists a forcing process $F'$ by which $B$ turns $G$ blue despite setting $xv$ as an edge leak. Therefore, $F'$ must contain a force $y\fs v$ where $y\neq x$. Thus $x\fs v,y\fs v\in \calf(B)$, proving the forward direction. 

Assume that for all $v\in V(G)\setminus B$, there exists $x\fs v, y\fs v\in \calf(B)$ with $y\neq x$. Clearly, this implies that $B$ is a zero forcing set of $G$ with forcing process $F$. Let $xv$ be an arbitrary edge leak. If neither $x\fs v$ nor $v\fs x$ are in $F$, then there is nothing to show. Therefore, without loss of generality, assume that $x\fs v\in F$. Let $B'$ be a set of blue vertices obtained from $B$ using $F$ such that $x\fs v$ is valid given $F$ (were it not for $xv$ being an edge leak), and $v\notin B'$. By assumption, there exists $y\fs v\in \calf(B)$ where $y\neq x$. This implies that there exists a set of forces $F'$ of $B$ in $G$ with $y\fs v$. Since $x\in B'$, it follows that $v\fs x\notin F'(B')$. Therefore, $(F\setminus F(B'))\cup F'(B')$ is a forcing process of $B$ that does not use $xv$.
\end{proof}

With Theorem \ref{edgedouble}, its not as surprising that the $\ell$-edge-leaky forcing number is equivalent to the $\ell$-leaky forcing number for all $\ell\geq 0$. The next lemma  finds an appropriate time to switch forcing processes and controls how edge leaks and forcing sets interact. Let $L- S$ where $S\subseteq V(G)$ denote the edges in $L$ that do not have vertices in $S.$ Explicitly, 
\[L- S=\{xy\in L: x,y\notin S\}.\] By extension, \[L\setminus (L- S)=\{ xy\in L: x\in S \text{ or } y\in S\}.\]

\begin{lem}\label{edgefail}
If $B$ is an $(\ell-1)$-edge-leaky forcing set and $L$ is a set of $k\geq \ell$ edge leaks, then $|L- B^{[\infty]}_L|\leq k-\ell$.
\end{lem}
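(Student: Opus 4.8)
The plan is to argue by contradiction, mirroring the proof of Lemma~\ref{fail}. Suppose the conclusion fails, so $|L- \binf_L|\geq k-\ell+1$, and write $M=L- \binf_L$ for the set of leaked edges both of whose endpoints remain white once the process with leak set $L$ stalls. Then the complementary set $L\setminus M=L\setminus(L- \binf_L)$ has size $k-|M|\leq \ell-1$. Since $|M|\geq k-\ell+1\geq 1$ the set $M$ is nonempty, and every edge of $M$ has both endpoints outside $\binf_L$, so $\binf_L\neq V(G)$. The goal is to show that $B$ still fails to force $G$ when only the (at most $\ell-1$) edge leaks in $L\setminus M$ are active; padding $L\setminus M$ up to a set of exactly $\ell-1$ edge leaks (possible because $|E(G)|\geq |L|=k\geq \ell$) then contradicts the hypothesis that $B$ is an $(\ell-1)$-edge-leaky forcing set.

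The key step is the claim that $\binf_{L\setminus M}=\binf_L$. The inclusion $\binf_L\subseteq \binf_{L\setminus M}$ is immediate, since deleting leaks can only enable more forces. For the reverse inclusion I would run the color-change rule from $B$ with leak set $L\setminus M$ and examine the first force, say $u\fs w$, that colors a vertex $w\notin \binf_L$. By minimality, every vertex blue before this step lies in $\binf_L$; in particular $u\in \binf_L$, and since the only white neighbor of $u$ at that moment is $w\notin \binf_L$, the vertex $w$ is also the only neighbor of $u$ that is white relative to the coloring $\binf_L$. The force $u\fs w$ is legal under $L\setminus M$, so $uw\notin L\setminus M$; and if $uw\in L$, then $uw\in M$, which would force both $u$ and $w$ outside $\binf_L$, contradicting $u\in \binf_L$. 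Hence $uw\notin L$, so $u\fs w$ would be a legal force out of $\binf_L$ even with leak set $L$, contradicting that $\binf_L$ is stalled under $L$. This proves the claim, and together with $\binf_L\neq V(G)$ it yields $\binf_{L\setminus M}\neq V(G)$, the desired contradiction.

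I expect the crux to be the claim $\binf_{L\setminus M}=\binf_L$, and in particular the observation that makes edge leaks behave exactly like vertex leaks here: a leaked edge whose two endpoints are both white is inert, so throwing away precisely the edges of $M$ from the leak set changes nothing about where the process stalls. The boundary cases ($\ell=1$, or $L\setminus M$ already of size $\ell-1$, or $|E(G)|=k$) are absorbed uniformly into the argument above. One could instead phrase the process-switching step through Lemma~\ref{switch}, but the stalled-set argument seems the most direct route.
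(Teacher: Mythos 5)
Your proof is correct and takes essentially the same route as the paper's: discard the inert leaks in $L- \binf_L$, observe that at most $\ell-1$ leaks remain, and conclude that $B$ fails to be an $(\ell-1)$-edge-leaky forcing set, a contradiction. The only difference is that you establish the key claim $\binf_{L\setminus M}=\binf_L$ rigorously via a first-bad-force argument, whereas the paper simply asserts that leaks whose endpoints are never forced ``never played a role'' in stopping the propagation.
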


\begin{proof}
Assume that $L$ is a set of $\ell$ edge leaks, and let $|L- B^{[\infty]}_L|\geq k-\ell+1$. 
%Every vertex in $B^{[\infty]}_L$ has either $0$, $1$, or  at least $2$ white neighbors. 
%If $v\in B^{[\infty]}_L$ such that $v$ has exactly one white neighbor $u$, then $vu\in L$  by the edge leaky forcing rule. 
Furthermore, let $L'=L\setminus (L- B^{[\infty]}_L)$. Since $|L'|=|L|-|L-\binf_L|$, it follows that $|L'|\leq k-k+\ell-1= \ell-1$.
Notice that edge leaks $uv\in L-B^{[\infty]}_L$ did not change the zero forcing behavior of $B$. In particular, these edge leaks never played a role in stopping $B$ from propagating because their endpoints never were forced. Therefore, $L'$ is a set of at most $\ell-1$ edge leaks which shows that $B$ is not an $(\ell-1)$-edge-leaky forcing set.
\end{proof}

As in the vertex leaky setting, Lemma \ref{edgefail} says that if  $B$ is an  $(\ell-1)$-edge-leaky forcing set and $L$ is a set of $\ell$ edge leaks, then $B$ forces at least one vertex in every edge leak. 

\begin{thm}\label{edgesetofforces}
A set $B$ is an $\ell$-edge-leaky forcing set if and only if $B$ is an $(\ell-1)$-leaky forcing set such that for every set of $\ell-1$ vertex leaks $L$ and $v\in V(G)\setminus B$, there exists $x\fs v,y\fs v\in \calf_L(B)$ with $y\neq x$. 
\end{thm}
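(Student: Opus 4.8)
The plan is to deduce this from Theorem~\ref{setofforces}. The right-hand condition stated here is \emph{verbatim} the one that Theorem~\ref{setofforces} uses to characterize $\ell$-leaky forcing sets, so it suffices to prove that a set $B$ is an $\ell$-edge-leaky forcing set if and only if it is an $\ell$-leaky forcing set. I would prove this equivalence by induction on $\ell$, which lets me use the characterizations of $(\ell-1)$-edge-leaky and $(\ell-1)$-leaky sets freely inside the induction. The base case $\ell\le 1$ is Theorem~\ref{edgedouble} together with the $\ell=1$ instance of Theorem~\ref{setofforces} (whose vertex-leak set is empty, so the two conditions literally agree). For the inductive step, observe that an $\ell$-edge-leaky set is also $(\ell-1)$-edge-leaky and an $\ell$-leaky set is also $(\ell-1)$-leaky; by the inductive hypothesis these two classes coincide, so $B$ may be assumed to be simultaneously $(\ell-1)$-edge-leaky and $(\ell-1)$-leaky, and only the ``top layer'' $\ell$ remains to be handled. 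This is exactly the setting in which Lemma~\ref{edgefail} is the edge-analog of Lemma~\ref{fail}, just as Theorem~\ref{setofforces} uses Lemma~\ref{fail}.

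For the direction ``$\ell$-leaky $\Rightarrow$ $\ell$-edge-leaky,'' fix a set $L$ of $\ell$ edge leaks and single out $e_\ell=xy\in L$. Since $B$ is $(\ell-1)$-edge-leaky there is a forcing process $F$ of $B$ respecting $L\setminus\{e_\ell\}$; if $F$ never crosses $e_\ell$ we are done, so assume without loss of generality that $x\fs y\in F$. Let $B'$ be obtained from $B$ using $F$ immediately before $y$ is forced, so that $x\in B'$, $y\notin B'$, and, since none of the forces producing $B'$ can cross $e_\ell$ (the force $x\fs y$ is not among them because $y\notin B'$, and $y\fs x$ is not in $F$ because $x$ was blue before $y$), the set $B'$ is obtained from $B$ while respecting \emph{all} of $L$ and $F\setminus F(B')$ respects all of $L$. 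By Lemma~\ref{switch} it then suffices to produce a forcing process $F'$ of $B$ with $F'(B')$ respecting $L$, i.e.\ to force $V(G)\setminus B'$ from $B'$ without recrossing any edge of $L$. Here I would apply Lemma~\ref{edgefail} (with $k=\ell$) to the configuration at $B'$ to see that every still-relevant edge leak has an endpoint that becomes blue, and then use that $B$ is $\ell$-leaky -- equivalently, through Theorem~\ref{setofforces}, that every white vertex has two distinct forces into it available in $\calf_L(B)$ for every $(\ell-1)$-vertex-leak set $L$ -- to route the ``last'' edge leak's force around that leak, closing the argument by one further application of Lemma~\ref{switch} (or by induction on $|V(G)\setminus B|$, after noting that $B'\supseteq B$ is still $\ell$-leaky). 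The reverse direction ``$\ell$-edge-leaky $\Rightarrow$ $\ell$-leaky'' is symmetric: given $\ell$ vertex leaks, run the $(\ell-1)$-leaky process exhaustively, invoke Lemma~\ref{fail} to conclude that all $\ell$ leaks become blue, switch via Lemma~\ref{switch} at the moment the last leak turns blue, and use $\ell$-edge-leakiness (again via the two-force mechanism of Theorem~\ref{edgedouble}) to route the remaining force around that last leak.

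The main obstacle is the step where the spliced-in process must be shown to \emph{respect the edge leaks}, not merely to exist. An edge leak $xy$ is a two-directional constraint, so the point is that once one endpoint (say $x$) has turned blue, the only remaining danger from $xy$ is the single force $x\fs y$ -- which reduces that leak to the one-leak situation of Theorem~\ref{edgedouble} and makes the two-force condition applicable. Getting the bookkeeping right is where the real work lies: verifying that $B'$ is genuinely reachable while respecting \emph{all} of $L$, that $F\setminus F(B')$ and the spliced tail jointly avoid every edge of $L$, that Lemma~\ref{edgefail} indeed pins down a usable switching time, and that the induction parameter strictly decreases (so the degenerate case $B'=B$ must be argued separately, directly from the two-force condition). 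All the structural inputs -- Lemmas~\ref{switch}, \ref{fail}, \ref{edgefail} and Theorems~\ref{edgedouble}, \ref{setofforces} -- are already available for this.
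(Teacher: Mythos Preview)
Your reduction to the equivalence ``$B$ is $\ell$-edge-leaky $\Longleftrightarrow$ $B$ is $\ell$-leaky'' via Theorem~\ref{setofforces} is correct, and your outer induction on $\ell$ matches the paper's scaffolding. The forward direction (converting $\ell$ vertex leaks into $\ell$ edge leaks) is essentially the paper's argument: once Lemma~\ref{fail} puts all vertex leaks $x_i$ into $\binf_{L'}$, each $x_i$ has a unique white neighbor $y_i$, and the edge set $\{x_iy_i\}$ stalls $B$ exactly as $L'$ did.

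The gap is in your backward direction. After you stop $F$ at $B'$ just before the force $x\fs y$ across $e_\ell$, you are back to facing \emph{all} $\ell$ edge leaks from $B'$, and nothing you invoke actually lets you continue. Lemma~\ref{edgefail} only tells you that every edge of $L$ meets $\binf_L$, which is the \emph{exhaustive} closure---it gives no further valid force once you are there. The two-force condition is phrased for $\ell-1$ \emph{vertex} leaks, and you never explain which $\ell-1$ vertex leaks you are applying it to in order to reroute around an \emph{edge} leak. And your fallback ``induction on $|V(G)\setminus B|$'' needs a progress guarantee: if $B'=B$ (which happens whenever the first force in every $(\ell-1)$-edge-leaky process crosses the omitted edge), you have made no headway, and ``directly from the two-force condition'' does not produce a valid first force respecting $L$ without a separate argument.

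What the paper does at this exact point is the step you are missing: having reached a blue set $B'$ in which every edge of $L$ has a blue endpoint, it chooses a set $A$ of at most $\ell-1$ of those blue endpoints covering all but one edge of $L$, \emph{deletes} $A$ (equivalently, treats $A$ as $\ell-1$ vertex leaks), and works in $G^*=G-A$. Now only one edge leak survives, $B^*=B'\setminus A$ is a zero forcing set of $G^*$, and the two-force hypothesis with $L=A$ supplies, for every white $v\in V(G^*)$, two forces $x\fs v,\,y\fs v\in\calf_A(B)$ with $x,y\notin A$; Theorem~\ref{edgedouble} then finishes in $G^*$. This deletion is precisely the mechanism that converts $\ell-1$ of your edge leaks into the $\ell-1$ vertex leaks that the two-force condition is stated for---and it is absent from your sketch.
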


\begin{proof} 
Proceed by induction on $\ell$. Notice Theorem \ref{edgedouble} is the base case when $\ell=1$. Assume that the claim holds for all $r<\ell$. 

Let $B$ be an $(\ell-1)$-leaky forcing set such that for every set of $\ell-1$ vertex leaks $L$ and $v\in V(G)\setminus B$, there exists $x\fs v, y\fs v\in \calf_L(B).$ Clearly, $B$ is an $(\ell-2)$-leaky forcing set such that for every set of $\ell-2$ vertex leaks $L$ and $v\in V(G)\setminus B$, there exists $x\fs v, y\fs v\in \calf_L(B)$. Therefore, by the induction hypothesis, $B$ is an $(\ell-1)$-edge-leaky forcing set. 

Let $L$ be a set of $\ell$ edge leaks. By Lemma \ref{edgefail}, it is possible to apply forces one by one until every edge in $L$  contains a blue vertex. Let  $B'$ be the resulting set of blue vertices.  Notice that $B'$ is an $(\ell-1)$-edge-leaky set since $B\subseteq B'$. Therefore, if $B'$ contains an edge in $L$, then there is nothing left to show. Thus, assume that every edge in $L$ contains at exactly one blue vertex in $B'$. 

Let $A\subseteq \{x\in B': xy \in L\}$  such that $|L-A|\leq 1$ and $|A|\leq \ell-1$. Notice that vertices in $A$ can only perform a force if an edge in $L$ is entirely blue first. Therefore, assume  that vertices in $A$ never perform a force;  otherwise, there is nothing left to show.

Let $G^*= G-A$. Since $B$ is an $(\ell-1)$-leaky forcing set, it follows that  $B^*=B'\setminus A$ is a zero forcing set of $G^*$. At this point there is at most one edge leak from $L$ in $G^*$. Let $v\in V(G^*)\setminus B^*$. By assumption, there exists $x\fs v,y\fs v\in\calf_A(B)$ in $G$ with $y\neq x$. Notice that $x,y\notin A$ since $v$ is a white neighbor of both $x$ and $y$. Therefore, $x,y\in V(G^*)$ and $x\fs v,y\fs v\in \calf_A(B^*)$. By Theorem \ref{edgedouble}, $B^*$ is a $1$-edge-leaky forcing set of $G^*$. Thus, $B^*$ can color $G^*$ blue, demonstrating that $B$ is an $\ell$-edge-leaky forcing set. 

To prove the contrapositive of the forward direction, assume that $B$ is an $(\ell-1)$-leaky forcing set, $L=\{x_1,\dots, x_{\ell-1}\}$ is a set of leaks, and $v\in V(G)\setminus B$ such that if $x\fs v, y\fs v \in \calf_L(B)$, then $x=y$. Since $B$ is an $(\ell-1)$-leaky forcing set, there exist $x_0\fs v\in \calf_L(B)$. Let $L'=L\cup \{x_0\}$ be a set of $\ell$ vertex leaks and exhaustively apply the zero forcing rule so that $B_{L'}^{[\infty]}$ is blue.  Notice that $B$ is not an $\ell$-leaky forcing set by Theorem \ref{setofforces}; so   $B_{L'}^{[\infty]}\subset V(G)$ (strictly contained). However, by Lemma \ref{fail},  $L'\subseteq B_{L'}^{[\infty]}$.

To complete the proof, set of vertex leaks $L'$ will be converted into a set of edge leaks. Notice that every vertex $x_i\in L'$ has exactly one white neighbor $y_i$; otherwise, it is possible to remove a vertex from $L'$ to conclude that $B$ is not an $(\ell-1)$-leaky forcing set. Let $L^*=\{x_iy_i: 0\leq i \leq \ell-1\}$. Now $L^*$ demonstrates that $B$ is not an $\ell$-edge-leaky forcing set.
\end{proof}

\begin{cor}\label{edgefund}
For any graph $G$ and $\ell\geq 0$,
\[\Z\sub \ell(G)=\Z'\sub\ell(G).\]
In particular, $B$ is an $\ell$-leaky forcing set if and only if $B$ is an $\ell$-edge-leaky forcing set. 
\end{cor}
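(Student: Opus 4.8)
The plan is to read off the corollary directly from Theorem~\ref{setofforces} and Theorem~\ref{edgesetofforces}, whose characterizations are, after unwinding, word-for-word identical. First I would dispose of the base case $\ell = 0$: by definition a $0$-edge-leaky forcing set is exactly a zero forcing set, which is exactly a $0$-leaky forcing set, so $\Z\sub0(G) = \Z(G) = \Z'\sub0(G)$ and the two families of sets coincide.

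Now fix $\ell \geq 1$. By Theorem~\ref{setofforces}, a set $B$ is an $\ell$-leaky forcing set if and only if $B$ is an $(\ell-1)$-leaky forcing set with the property that for every set of $\ell-1$ vertex leaks $L$ and every $v \in V(G) \setminus B$ there exist distinct forces $x \fs v, y \fs v \in \calf_L(B)$. By Theorem~\ref{edgesetofforces}, a set $B$ is an $\ell$-edge-leaky forcing set if and only if $B$ is an $(\ell-1)$-leaky forcing set satisfying this exact same condition. Since the two characterizing conditions are literally the same statement about $B$, it follows that $B$ is an $\ell$-leaky forcing set if and only if $B$ is an $\ell$-edge-leaky forcing set, which is the second assertion of the corollary. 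Intersecting this equivalence with the cardinality constraint, a smallest set in one family is a smallest set in the other, so $\Z\sub\ell(G) = \Z'\sub\ell(G)$.

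There is no substantive obstacle here; the only points needing care are purely bookkeeping. One should note that Theorem~\ref{edgesetofforces} is deliberately phrased with \emph{vertex} leaks $L$ (not edge leaks) in its "moreover" clause, and it is precisely this phrasing that makes its condition coincide syntactically with that of Theorem~\ref{setofforces}; and one should confirm the $\ell = 0$ anchor separately since both theorems are stated for $\ell \geq 1$ (Theorem~\ref{edgesetofforces} being proved by induction with Theorem~\ref{edgedouble} as its base case). Given those observations, the corollary is immediate.
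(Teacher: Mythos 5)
Your proof is correct and matches the paper's intent: Corollary~\ref{edgefund} is stated without proof precisely because it follows by juxtaposing the identical characterizations in Theorems~\ref{setofforces} and~\ref{edgesetofforces}, exactly as you argue. Your explicit handling of the $\ell=0$ anchor is a small but welcome addition of care.
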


The combination of Theorems \ref{setofforces} and \ref{edgesetofforces} provide insight into how leaks interact with the zero forcing rule. In particular, vertex leaks are nicer than edge leaks. Once a vertex leak turns blue, it can safely be deleted from the graph and disregarded for the rest of the process. Edge leaks do not afford us the same luxury. Even if an endpoint of an edge leak turns blue, the vertex cannot be deleted without further care, since it might perform a force later.

\section{On specified-leaky forcing}\label{spec}

Throughout this section,  $v\fs u$ is a \emph{specified leak} if  $v$ is prohibited from forcing $u$.  In this sense, setting a vertex $v$ as a leak represents the set of specified leaks $\{v\fs u: u\in N(v)\}$, and setting an edge $uv$ as a leak represents the set of specified leaks $\{v\fs u, u\fs v\}$. 

It seems as though prohibiting $v\fs x$ and $v\fs y$ is not more restrictive than prohibiting just $v\fs x$ or $v\fs y$, but not both. This is more intuitive after considering the fact that in any particular forcing process $F$, only setting $v\fs x$ or $v\fs y$ as a specified leak poses a problem since $v\fs x$ and $v\fs y$ are not both in $F$. Furthermore, the strength of leaks being picked after the initial blue sets makes a single leak $v\fs x$ as devastating as two leaks $v\fs x, v\fs y$. 

To formalize this intuition a little more, consider the following definitions. A set $B$ is a \emph{specified $\ell$-leaky forcing set of $G$} if $B$ can color $G$ blue when any set of $\ell$ forces are prohibited. Let $\Z^s\sub \ell(G)$ be the minimum size of a specified $\ell$-leaky forcing set of $G$. 

In Section \ref{edge}, Lemma \ref{edgefail} is used to control the interaction between an initial blue set and a set of edge leaks.  However, the proof of  Theorem \ref{specsetofforces} does not require a lemma analogous to Lemma \ref{edgefail} even thought the statements of the two theorems are similar. Proposition \ref{specfail} is analogous to Lemma \ref{edgefail}, and will be proven at the end of this section.

Consider the following definitions before proceeding with the proof of Theorem \ref{specsetofforces}:  If $v\fs u$ is a specified leak, the $v$ is called \emph{the tail of the leak $v\fs u$} and $u$ is the \emph{head of the leak $v\fs u$}. Let $T(L)=\{ x: x\fs y\in L\}$ be the set of tails of $L$ and $H(L)=\{ y:x\fs y\in L\}$ be the set of heads of $L$.

\begin{thm}\label{specsetofforces}
 A set $B$ is a specified $\ell$-leaky forcing set if and only if $B$ is an $(\ell-1)$-leaky forcing set such that for every set of $\ell-1$ vertex leaks $L$ and $v\in V(G)\setminus B$, there exist $x\fs v,y\fs v\in \calf_L(B)$ with $y\neq x$.
\end{thm}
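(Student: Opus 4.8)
The plan is to mimic the proof of Theorem~\ref{edgesetofforces}, proceeding by induction on $\ell$ and exploiting the fact that vertex leaks, edge leaks, and specified leaks are all "specified-leak-like" — a vertex leak is just a bundle of specified leaks sharing a tail, so a specified $\ell$-leaky forcing set is in particular an $(\ell-1)$-leaky forcing set, and we already know from Theorem~\ref{setofforces} what that entails. For the base case $\ell=1$, I would argue exactly as in Theorem~\ref{edgedouble}: if $B$ is a specified $1$-leaky forcing set and $v\in V(G)\setminus B$ with some force $x\fs v\in F$, then prohibiting $x\fs v$ forces a second forcing process $F'$ to use a different force $y\fs v$ with $y\neq x$; conversely, given two forces $x\fs v, y\fs v\in\calf(B)$ for each white $v$, any single prohibited force $x\fs v$ can be bypassed by switching from $F$ to a process $F'$ containing $y\fs v$ via Lemma~\ref{switch}, choosing the switch point $B'$ so that $v\notin B'$ and noting that the prohibited force never gets used after the switch.

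For the inductive step I would take a set $L$ of $\ell$ specified leaks. The key move is to use Proposition~\ref{specfail} (or, in advance of it, Lemma~\ref{fail} applied to the vertex leak set $T(L)$, which is a set of at most $\ell$ vertices): run forces one at a time until all but possibly one of the specified leaks in $L$ has had its tail turned blue — actually, more precisely, until one can isolate a subset $A\subseteq T(L)$ of tails with $|A|\leq \ell-1$ covering all but at most one leak of $L$. Call the resulting blue set $B'$; since $B\subseteq B'$ it is still a specified $(\ell-1)$-leaky forcing set and satisfies the relevant hypotheses. Then pass to $G^*=G-A$: since $B$ is $(\ell-1)$-leaky, $B^*=B'\setminus A$ is a zero forcing set of $G^*$, and now at most one specified leak of $L$ survives in $G^*$. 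For each white $v\in V(G^*)\setminus B^*$, the hypothesis with vertex leak set $A$ (of size $\leq \ell-1$) gives $x\fs v, y\fs v\in\calf_A(B)$ with $x\neq y$, and since $v$ is a white neighbor of both we get $x,y\notin A$, so these forces live in $G^*$ and $x\fs v,y\fs v\in\calf_A(B^*)$. By the base case $B^*$ is a specified $1$-leaky forcing set of $G^*$, so it survives the last surviving leak, and $B$ colors $G$ blue. (One must also handle the bookkeeping case where a vertex of $A$ would perform a force — but as in Theorem~\ref{edgesetofforces} this only happens after a leak becomes irrelevant, so there is nothing to prove.)

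For the converse direction I would prove the contrapositive: suppose $B$ is an $(\ell-1)$-leaky forcing set but there is a set $L=\{x_1,\dots,x_{\ell-1}\}$ of $\ell-1$ vertex leaks and a white vertex $v$ such that every force into $v$ in $\calf_L(B)$ has the same tail $x_0$. Then by Theorem~\ref{setofforces}, $B$ is not an $\ell$-leaky forcing set, and by Lemma~\ref{fail} the vertex leak set $L'=L\cup\{x_0\}$ of size $\ell$ leaves $B^{[\infty]}_{L'}$ strictly inside $V(G)$ while containing all of $L'$. Each $x_i\in L'$ then has exactly one white neighbor $y_i$ (else we could drop a leak and contradict $(\ell-1)$-leakiness), so converting each vertex leak $x_i$ into the single specified leak $x_i\fs y_i$ gives a set $L^{s}=\{x_i\fs y_i: 0\le i\le \ell-1\}$ of $\ell$ specified leaks that still stalls $B$. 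Hence $B$ is not a specified $\ell$-leaky forcing set, completing the contrapositive.

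The main obstacle I expect is the same one that makes the proof of Theorem~\ref{edgesetofforces} delicate: justifying that the set $A$ of "already-blue tails" can be deleted cleanly. Unlike vertex leaks, a specified leak $x_i\fs y_i$ does not simply vanish when $x_i$ turns blue — $x_i$ may still legally force some \emph{other} neighbor later — so I need to be careful that passing to $G^*=G-A$ does not destroy forces that $B^*$ genuinely needs, and that the forces $x\fs v, y\fs v$ supplied by the hypothesis really do avoid $A$. The argument that $x,y\notin A$ because $v$ is their common white neighbor (so they cannot already be "used up") is the crux, together with the observation that a tail in $A$ can only ever fire after its associated leak has become moot, so assuming it never fires loses no generality. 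Getting the counting right — that $|A|\le\ell-1$ and $|L-A|\le 1$ can be simultaneously arranged using Proposition~\ref{specfail} — is the other place where care is needed, but it parallels the edge-leaky case closely.
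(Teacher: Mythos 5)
Your proposal is essentially correct, and your converse direction (converting a failing vertex-leak configuration $L'=L\cup\{x_0\}$ into $\ell$ specified leaks $x_i\fs v_i$ via Lemma~\ref{fail}) is exactly the paper's argument. However, for the other direction you take a genuinely different and much heavier route than the paper does. You run the full inductive machinery of Theorem~\ref{edgesetofforces}: Proposition~\ref{specfail}, a deletion set $A$ of blue tails, passage to $G^*=G-A$, and an appeal to the base case. The paper instead observes that specified leaks are \emph{dominated} by vertex leaks placed at their tails: if a set $L$ of $\ell$ specified leaks stops $B$, then the vertex-leak set $T(L)$, which is at least as restrictive and has $|T(L)|\le \ell$, also stops $B$, so $B$ is not an $\ell$-leaky forcing set and Theorem~\ref{setofforces} immediately yields the failure of the stated condition. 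No induction, no surgery, and no analogue of Lemma~\ref{edgefail} is needed --- the paper explicitly remarks on this, proving Proposition~\ref{specfail} only afterward as a standalone observation. (This domination trick is unavailable for edge leaks, which is why Theorem~\ref{edgesetofforces} genuinely needs the harder argument you are imitating.) Your approach does work, but one micro-justification should be repaired: your claim that ``a tail in $A$ can only ever fire after its associated leak has become moot'' is false for specified leaks, since the tail of $x\fs y$ may legally force some $z\ne y$ while $y$ is still white; the correct reason you may assume vertices of $A$ never fire is the one you also give, namely that $B$ is an $(\ell-1)$-leaky forcing set with respect to the vertex-leak set $A$, so a forcing process avoiding all forces out of $A$ exists.
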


\begin{proof}
Let $L$ be a set of $\ell$ specified leaks that  shows that $B$ is not a specified $\ell$-leaky forcing set. Notice that $|T(L)|\leq \ell$. Therefore, $T(L)$ demonstrates that $B$ is not an $\ell$-leaky forcing set. Thus, by Theorem \ref{setofforces}, there exist $v\in V(G)\setminus B$ such that if $x\fs v,y\fs v\in \calf_{T(L)}(B)$ then $y=x$. 

Suppose that $B$ is an $(\ell-1)$-leaky forcing set, $L=\{x_1,\dots, x_{\ell-1}\}$ is a set of $\ell-1$ leaks, and $v_0\in V(G)\setminus B$ such  $x\fs v_0, y\fs v_0\in \calf(L)$ implies $y=x$. Since $B$ is an $(\ell-1)$-leaky forcing set, there exists $x_0\fs v_0\in \calf_L(B)$.  Let $L'=L\cup \{x_0\}$. By Lemma \ref{fail},
\[|L'\setminus \binf_{L'}|=0.\] Notice that if $|N(y)\cap \binf_{L'}|\geq 2$ for some $y\in L'$, then $L'\setminus\{y\}$ would show that $B$ is not an $(\ell-1)$-leaky forcing set. Therefore, each vertex  $x_i\in L'$ has exactly one white neighbor $v_i\in V(G)\setminus \binf_{L'}$. The set  of specified leaks 
\[\{x_i\fs v_i: 0\leq i\leq \ell-1\}\] shows that $B$ is not a specified $\ell$-leaky forcing set. 
\end{proof}

\begin{cor}\label{specfund}
For any graph $G$ and $\ell\geq 0,$
\[ \Z^s\sub\ell(G)=\Z\sub\ell(G).\]
In particular, $B$ is an $\ell$-leaky forcing set if and only if $B$ is a specified $\ell$-leaky forcing set. 
\end{cor}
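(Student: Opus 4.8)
The plan is to derive this directly from Theorems~\ref{setofforces} and~\ref{specsetofforces}, exactly as Corollary~\ref{edgefund} is derived from Theorems~\ref{setofforces} and~\ref{edgesetofforces}. The key observation is that the right-hand side of Theorem~\ref{setofforces} and the right-hand side of Theorem~\ref{specsetofforces} are \emph{verbatim} the same condition on $B$: namely, that $B$ is an $(\ell-1)$-leaky forcing set such that for every set of $\ell-1$ vertex leaks $L$ and every $v\in V(G)\setminus B$ there exist $x\fs v,y\fs v\in\calf_L(B)$ with $y\neq x$. Consequently, for every fixed $\ell\geq 1$, a set $B$ is an $\ell$-leaky forcing set if and only if it satisfies this common condition, if and only if it is a specified $\ell$-leaky forcing set; so the family of $\ell$-leaky forcing sets of $G$ coincides with the family of specified $\ell$-leaky forcing sets of $G$.

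Concretely, I would first dispatch the base case $\ell=0$: a specified $0$-leaky forcing set is one that colors $G$ blue when no force is prohibited, i.e.\ simply a zero forcing set, so $\Z^s\sub0(G)=\Z(G)=\Z\sub0(G)$ and the ``in particular'' statement is trivial. Then for $\ell\geq 1$ I would chain the two biconditionals above to obtain ``$B$ is an $\ell$-leaky forcing set $\iff$ $B$ is a specified $\ell$-leaky forcing set,'' proving the second sentence of the corollary. The equality $\Z^s\sub\ell(G)=\Z\sub\ell(G)$ is then immediate, since each parameter is by definition the minimum cardinality of a set taken from one and the same family.

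There is no real obstacle here; the content has already been extracted into Theorems~\ref{setofforces} and~\ref{specsetofforces}, and this corollary is a purely formal consequence. The only point that deserves a sentence of care is precisely that no translation between vertex leaks and specified leaks is needed at the level of the corollary: one might worry that a specified $\ell$-leaky forcing set is not obviously an $(\ell-1)$-vertex-leaky forcing set (a single vertex leak $v$ corresponds to $\deg(v)$ specified leaks, generally far more than $\ell$), but Theorem~\ref{specsetofforces} already settles that implication, so it may simply be cited rather than re-proved. Once the two families of forcing sets are identified, taking minima finishes the argument.
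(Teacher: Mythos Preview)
Your proposal is correct and is exactly the intended argument: the paper states Corollary~\ref{specfund} immediately after Theorem~\ref{specsetofforces} with no separate proof, because it follows formally by combining Theorems~\ref{setofforces} and~\ref{specsetofforces} (which share the same right-hand side) and noting that the $\ell=0$ case is trivial.
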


As previously noted, Proposition \ref{specfail} controls the interaction between a specified $(\ell-1)$-leaky forcing set and a set of specified leaks $L$. 
Suppose that $L$ is a set of specified leaks and let $S\subseteq V(G)$.
Let \[ L-S=\{ x\fs y\in L: x\notin S\}\]
and \[L\setminus (L-S)=\{x\fs y\in L: x\in S\}.\] The next Proposition controls how many leaks are required to halt a specified $(\ell-1)$-leaky forcing set. This is an analogous result to Lemma \ref{edgefail}.

\begin{prop}\label{specfail}
If $B$ is an $(\ell-1)$-leaky forcing set and $L$ is a set of $k\geq \ell$ specified leaks, then $|L-B^{[\infty]}_L|\leq k-\ell$. 
\end{prop}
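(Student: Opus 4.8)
The plan is to argue by contradiction, in close parallel with the proof of Lemma~\ref{edgefail}. Suppose $B$ is an $(\ell-1)$-leaky forcing set, $L$ is a set of $k\geq\ell$ specified leaks, and $|L- \binf_L|\geq k-\ell+1$. Set $L'=L\setminus(L- \binf_L)=\{x\fs y\in L: x\in \binf_L\}$, so that $|L'|=|L|-|L- \binf_L|\leq k-(k-\ell+1)=\ell-1$. The goal is to show that this already forces $B$ to fail on a set of at most $\ell-1$ leaks, contradicting the hypothesis.

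The key observation is that the specified leaks in $L- \binf_L$ play no role in the $L$-leaky process: each such leak $x\fs y$ has tail $x\notin\binf_L$, and since only blue vertices perform forces, $x$ never performs a force, so prohibiting $x\fs y$ is never the reason a force is blocked. Concretely I would prove $\binf_{L'}=\binf_L$. The inclusion $\binf_{L'}\supseteq\binf_L$ is immediate since $L'\subseteq L$. For the reverse, take any $L'$-respecting forcing process and let $x\fs y$ be its first force with $y\notin\binf_L$; at that moment every blue vertex lies in $\binf_L$, so $x\in\binf_L$ and $y$ is the unique neighbor of $x$ outside $\binf_L$. If $x\fs y\in L$, then $x\in\binf_L$ gives $x\fs y\in L'$, contradicting that the process respects $L'$; if $x\fs y\notin L$, then $x\fs y$ is a valid $L$-respecting force out of $\binf_L$, contradicting that $\binf_L$ is closed under $L$-leaky forcing. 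Hence $\binf_{L'}=\binf_L$. Since $|L- \binf_L|\geq 1$ there is a leak whose tail is not in $\binf_L$, so $\binf_L\subsetneq V(G)$, and therefore $\binf_{L'}=\binf_L\subsetneq V(G)$ with $|L'|\leq\ell-1$; thus $B$ is not a specified $(\ell-1)$-leaky forcing set, and by Corollary~\ref{specfund} it is not an $(\ell-1)$-leaky forcing set, a contradiction.

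The main obstacle is the careful justification that passing from the leak set $L$ to the smaller set $L'$ does not enable any new force, i.e.\ $\binf_{L'}=\binf_L$; everything else is bookkeeping. If one prefers to avoid invoking Corollary~\ref{specfund}, the same conclusion follows directly in the vertex-leak model: $T(L')$ is a vertex-leak set of size at most $\ell-1$, every force whose tail avoids $T(L')$ is $L'$-respecting, so $\binf_{T(L')}\subseteq\binf_{L'}=\binf_L\subsetneq V(G)$, which already witnesses that $B$ is not an $(\ell-1)$-leaky forcing set.
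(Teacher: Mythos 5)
Your proposal is correct and follows essentially the same route as the paper: pass to $L'=L\setminus(L-\binf_L)$, observe that leaks with white tails never blocked anything, and convert $T(L')$ into a set of at most $\ell-1$ vertex leaks that stalls $B$. You simply make explicit the step the paper states tersely (that $\binf_{L'}=\binf_L$, hence $\binf_{T(L')}\subseteq\binf_L\subsetneq V(G)$), and your use of Corollary~\ref{specfund} is not circular given the paper's ordering, though the direct vertex-leak argument you also give is the cleaner finish.
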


\begin{proof}
Assume that $L$ is a set of $k\geq \ell$ specified leaks, and let $|L-B^{[\infty]}_L|\geq k-\ell+1$. If $v\in B^{[\infty]}_L$ and $v$ has exactly one white neighbor $u$, then $v\fs u\in L$ by the leaky forcing rule. Furthermore, let $L'=L\setminus(L-B^{[\infty]}_L)$ and notice that $|L'|\leq \ell-1$. Specified leaks $x\fs y\in L-B^{[\infty]}_L$ did  not change the zero forcing behavior of $B$. In particular, these specified leaks never played a role in stopping $B$ from propagating because the tail never turned blue. Therefore, $T(L')$ is a set of at most $\ell-1$  vertex leaks which show that $B$ is not an $(\ell-1)$-leaky forcing set. 
\end{proof}

\section{On mixed-leaky forcing} \label{mix}

In this section, investigates what happens when a system has various types of leaks preventing the zero forcing process from finishing. A set $B\subseteq V(G)$ is a \textit{mixed $\ell$-leaky forcing set} of a graph $G$ if $B$ can color $G$ blue despite any set of $\ell$ vertex leaks, edge leaks, or specified leaks (refer to these collectively as \emph{leaks}). Let $\Z\sub{\ell}^m(G)$ be the minimum size of a mixed $\ell$-leaky forcing set.

\begin{lem}\label{mixfail}
Let  $L=L_1\cup L_2\cup L_3$ be a set of $k\geq \ell$ leaks where $L_1$ is the set of vertex leaks, $L_2$ is the set of edge leaks, and $L_3$ is the set of specified leaks.
If $B$ is a mixed $(\ell-1)$-leaky forcing set, then $|L_1\setminus B^{[\infty]}_L|+ |L_2- B^{[\infty]}_L| + |L_3- B^{[\infty]}_L| \leq k-\ell.$ 
\end{lem}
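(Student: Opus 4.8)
The plan is to mimic the proofs of Lemma \ref{edgefail} and Proposition \ref{specfail}, but now tracking three kinds of "wasted" leaks simultaneously. Assume for contradiction that
\[
|L_1\setminus B^{[\infty]}_L| + |L_2 - B^{[\infty]}_L| + |L_3 - B^{[\infty]}_L| \geq k - \ell + 1.
\]
First I would isolate the leaks that actually did something during the run of the color-change rule with leak set $L$: set $L_1' = L_1 \cap B^{[\infty]}_L$ (vertex leaks that turned blue), $L_2' = L_2 \setminus (L_2 - B^{[\infty]}_L)$ (edge leaks with an endpoint in $B^{[\infty]}_L$), and $L_3' = L_3 \setminus (L_3 - B^{[\infty]}_L)$ (specified leaks whose tail turned blue). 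By the three displayed identities relating $|L_i|$ to $|L_i'|$ and $|L_i \setminus B^{[\infty]}_L|$ (resp.\ $|L_i - B^{[\infty]}_L|$), we get $|L_1'| + |L_2'| + |L_3'| = k - (\text{sum above}) \leq \ell - 1$.

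The key claim is then that $L' = L_1' \cup L_2' \cup L_3'$ is itself a set of at most $\ell - 1$ leaks that halts $B$ in exactly the same way $L$ did — i.e.\ $B^{[\infty]}_{L'} = B^{[\infty]}_L \subsetneq V(G)$ — contradicting the hypothesis that $B$ is a mixed $(\ell-1)$-leaky forcing set. To justify this, I would run the color-change rule with leak set $L'$ and argue by induction on the time steps that it produces the identical blue sets as with $L$. The point, exactly as in the two earlier proofs, is that every leak discarded in passing from $L$ to $L'$ never had an opportunity to block a force: a discarded vertex leak in $L_1 \setminus B^{[\infty]}_L$ never turned blue, so it never could have forced anyway; a discarded edge leak in $L_2 - B^{[\infty]}_L$ has both endpoints white at the end, so no force was ever attempted across it; and a discarded specified leak $x \fs y \in L_3 - B^{[\infty]}_L$ has $x$ white throughout, so $x$ never attempted any force. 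Conversely, no force becomes newly blocked under $L'$ since $L' \subseteq L$ (after the obvious identification of vertex/edge/specified leaks). Hence the two processes stay in lockstep and terminate at the same proper subset $B^{[\infty]}_L \subsetneq V(G)$.

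Finally, having $B^{[\infty]}_{L'} = B^{[\infty]}_L \subsetneq V(G)$ with $|L'| \leq \ell - 1$ directly says $B$ fails to be a mixed $(\ell-1)$-leaky forcing set, the desired contradiction. The main obstacle I anticipate is the bookkeeping in the "lockstep" induction: one must be careful that a force valid under $L$ is still valid under $L'$ (no newly introduced blocks) and vice versa (no force that was blocked by a discarded leak), which requires checking each leak type against each of the three obstruction mechanisms — but in every case the discarded leak's tail/endpoint is white at termination, which makes the check routine. A minor subtlety worth a sentence: when a vertex leak of $L_1$ turns blue it may then wish to force, and such a force could in principle have a blocked partner among the discarded leaks; but since those partners are themselves never "active," the validity of forces under $L'$ is unaffected, so keeping $L_1'$ rather than all of $L_1$ loses nothing.
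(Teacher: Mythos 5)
Your proposal is correct and follows essentially the same route as the paper: the same contrapositive setup, the identical set $L'$ obtained by discarding the vertex leaks never forced blue, the edge leaks with no blue endpoint, and the specified leaks with white tails, the same disjoint-union count giving $|L'|\leq \ell-1$, and the same observation that the discarded leaks never blocked any force. Your lockstep induction merely spells out in more detail the paper's one-sentence justification that these leaks ``did not change the zero forcing behavior of $B$.''
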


\begin{proof}
To prove the contrapositive, assume that $|L_1\setminus B^{[\infty]}_L|+ |L_2- B^{[\infty]}_L| + |L_3- B^{[\infty]}_L| \geq k-\ell+1.$ Every vertex in $B^{[\infty]}_L$ has either $0$, $1$, or at least $2$ white neighbors. If $v\in B^{[\infty]}_L$ such that $v$ has exactly one white neighbor $u$, then either $v\in L_1$, $vu\in L_2$, or $v\fs u\in L_3$. Let $L'=[L_1\setminus (L_1\setminus B^{[\infty]}_L)]\cup[ L_2 \setminus (L_2- B^{[\infty]}_L)] \cup [L_3 \setminus (L_3- B^{[\infty]}_L)].$ Since this is a disjoint union, 
\begin{align*}
    |L'| &= |L_1\setminus (L_1\setminus B^{[\infty]}_L)| + |L_2 \setminus (L_2- B^{[\infty]}_L)| + |L_3 \setminus (L_3- B^{[\infty]}_L)| \\
    &\leq \ell-1.
\end{align*}
Notice that any leak in either $L_1\setminus B^{[\infty]}_L$, $L_2- B^{[\infty]}_L$, or $L_3- B^{[\infty]}_L$ did not change the zero forcing behavior of $B$. In particular, these leaks never played a role in stopping $B$ from propagating because the vertex leaks were never forced blue, the tails of the specified leaks were never forced blue, and the endpoints of the edge leaks were never forced blue. Therefore, $L'$ is a set of at most $\ell-1$ leaks which shows $B$ is not a mixed $(\ell-1)$-leaky forcing set. 
\end{proof}

Notice that if $L_2$ and $L_3$ are empty, then  Lemma \ref{fail} is recovered. With this more general formulation of leaky forcing,  the next theorem can be proven.

\begin{thm}
A set $B$ is a mixed $\ell$-leaky forcing set if and only if $B$ is an $(\ell-1)$-leaky forcing set such that for every $\ell-1$ vertex leaks $L$ and $v\in V(G)\setminus B$, there exist $x\fs v,y\fs v\in \calf_L(B)$ with $y\neq x$.
\end{thm}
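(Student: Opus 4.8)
The plan is to argue by induction on $\ell$; the forward implication is immediate and the reverse implication carries the work. It is convenient to note first that, by Theorem~\ref{setofforces}, the stated condition on $B$ is equivalent to ``$B$ is an $\ell$-leaky forcing set,'' so the theorem reads ``$B$ is mixed $\ell$-leaky iff $B$ is $\ell$-leaky.'' The forward direction is then trivial: a set of $\ell$ vertex leaks is a particular set of $\ell$ mixed leaks, so every mixed $\ell$-leaky forcing set is $\ell$-leaky, and Theorem~\ref{setofforces} recovers the stated condition. The base case $\ell=1$ of the reverse direction is handled by Theorems~\ref{setofforces}, \ref{edgedouble}, and~\ref{specsetofforces}: one mixed leak is one vertex leak, one edge leak, or one specified leak, and each of these three theorems at $\ell=1$ equates handling all leaks of its type with the stated condition at $\ell=1$.

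For the inductive step of the reverse direction, assume $B$ satisfies the condition at level $\ell$. The condition only weakens at level $\ell-1$, so the induction hypothesis makes $B$ a mixed $(\ell-1)$-leaky forcing set. Let $L=L_1\cup L_2\cup L_3$ be any set of $\ell$ leaks (vertex, edge, specified). Since $B$ is mixed $(\ell-1)$-leaky, Lemma~\ref{mixfail} with $k=\ell$ forces $|L_1\setminus B^{[\infty]}_L|$, $|L_2- B^{[\infty]}_L|$ and $|L_3- B^{[\infty]}_L|$ all to $0$; so the leak-respecting colour change rule reaches, via a process from $B$ respecting $L$, a blue set $B'\supseteq B$ in which every vertex leak of $L_1$ is blue, every edge leak of $L_2$ has a blue endpoint, and the tail of every specified leak of $L_3$ is blue. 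Any leak that can no longer block a force from $B'$ on — an edge leak with both endpoints blue, an edge leak incident to a vertex leak, a specified leak with a blue head, or a specified leak whose tail is a vertex leak — may be discarded, since then at most $\ell-1$ leaks remain and the mixed $(\ell-1)$-leaky set $B'$ (which is such since $B\subseteq B'$) finishes; so assume no leak is vacuous. In particular each surviving edge leak has exactly one blue endpoint and no vertex-leak endpoint, and each surviving specified leak has a blue tail, a white head, and no vertex-leak vertex.

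Now delete the vertex leaks: set $G_1=G-L_1$ and $B_1=B'\setminus L_1$. A short restriction argument of the kind used implicitly in the proof of Theorem~\ref{edgesetofforces} (adjoin $L_1$ as $\le\ell$ vertex leaks of $G$, use that $B$ is $\ell$-leaky, and restrict the resulting process to $G_1$) shows that $B_1$ is an $(\ell-|L_1|)$-leaky forcing set of $G_1$. In $G_1$ there are no vertex leaks; each surviving edge leak $uv$ keeps its blue endpoint $u\in B_1$, and since $u$ is and stays blue the force $v\fs u$ can never occur, so from $B_1$ on the edge leak $uv$ imposes exactly the constraint of the specified leak $u\fs v$. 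Replacing every edge leak in this way turns the constraints in $G_1$ into a set $L'$ of at most $|L_2|+|L_3|=\ell-|L_1|$ specified leaks. By Corollary~\ref{specfund}, $B_1$, being $(\ell-|L_1|)$-leaky in $G_1$, is specified $(\ell-|L_1|)$-leaky there, hence specified $|L'|$-leaky, so $B_1$ colours $G_1$ while avoiding $L'$. That completion lifts to $G$ — the deleted vertices $L_1$ are blue at $B'$ and take no part in it — and, by Lemma~\ref{switch}, splices onto the process $B\to B'$ to give a process of $B$ that colours $G$ and respects $L$. Hence $B$ is a mixed $\ell$-leaky forcing set.

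The step I expect to be the crux is this reduction following Lemma~\ref{mixfail}: verifying that deleting the (now inert) vertex leaks preserves leaky-forcing power with the parameter dropping by exactly $|L_1|$, and that an edge leak with a blue endpoint collapses to a single specified leak — it is this collapse that lets the specified-leaky theorem close the argument — along with the routine but fussy pruning of vacuous leaks. With those in hand the three leak types are absorbed one after another: vertex leaks by deletion, edge leaks by collapsing to specified leaks, and specified leaks by Theorem~\ref{specsetofforces}.
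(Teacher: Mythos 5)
Your proof is correct, and it shares the paper's overall skeleton: induction on $\ell$, Lemma~\ref{mixfail} to reach a blue set $B'$ in which every vertex leak is blue, every edge leak has a blue endpoint, and every specified leak has a blue tail, followed by pruning of redundant leaks, deletion of vertices to neutralize leaks, and a hand-off to an already-established leaky-forcing theorem on the resulting subgraph. Where you genuinely diverge is in how the inductive step closes. The paper selects $\ell-1$ of the $\ell$ leaks, deletes one blue vertex per selected leak (the vertex leak itself, a blue endpoint of an edge leak, or the tail of a specified leak) to form $G^*=G-A$ with $|A|\le\ell-1$, so that exactly one leak of arbitrary type survives, and then finishes with the level-$1$ characterizations from Theorems~\ref{setofforces}, \ref{edgedouble}, and \ref{specsetofforces}. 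You instead delete only the vertex leaks, observe that an edge leak $uv$ whose endpoint $u$ is blue and remains blue imposes from that moment exactly the constraint of the specified leak $u\fs v$, and absorb all $\ell-|L_1|$ surviving constraints at once into the specified-leaky framework, closing with Corollary~\ref{specfund} at level $\ell-|L_1|$ rather than at level $1$. Your route leans on the full strength of the specified-leaky equivalence but avoids the paper's slightly delicate choice of which $\ell-1$ leaks to neutralize and its appeal to three separate level-$1$ statements; both routes rest on the same asserted-but-routine restriction claim (deleting a set $W$ of blue vertices from an $\ell$-leaky forcing set costs at most $|W|$ in the leak parameter on $G-W$), which the paper likewise states without detail, so you are at a comparable level of rigor. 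Your opening observation that, by Theorem~\ref{setofforces}, the stated condition on $B$ is simply ``$B$ is an $\ell$-leaky forcing set'' is a clean simplification of the bookkeeping that the paper leaves implicit until its corollary.
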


\begin{proof}
Proceed by induction on $\ell$. Notice either Theorem \ref{setofforces}, Theorem \ref{edgedouble}, or Theorem \ref{specsetofforces} handles the base case when $\ell=1.$ Assume the claim holds for all $r<\ell$.

Let $B$ be an $(\ell-1)$-leaky forcing set such that for every set of $\ell-1$ vertex leaks $L$ and $v\in V(G)$, there exists $x\fs v, y\fs v \in \calf_L(B)$ with $y\neq x$. Clearly, $B$ is an $(\ell-2)$-leaky forcing set such that for every set of $\ell-2$ vertex leaks $L$ and $v\in V(G)\setminus B$, there exists $x\fs v, y\fs v \in \calf_L(B)$ with $y\neq x$. Therefore by the induction hypothesis, $B$ is a mixed $(\ell-1)$-leaky forcing set.

Let $L=L_1\cup L_2\cup L_3$ be a set of $\ell$ leaks where $L_1$ is the set of vertex leaks, $L_2$ is the set of edge leaks, and $L_3$ is the set of specified leaks. By Lemma \ref{mixfail}, it is possible to apply forces one by one until every vertex leak in $L_1$ is blue, every edge leak in $L_2$ contains a blue vertex, and the tails of  specified leaks in $L_3$ are blue. Let $B'$ be the resulting set of blue vertices. Notice that $B'$ is a mixed $(\ell-1)$-leaky forcing set since $B\subseteq B'.$ If $B'$ contains an edge from either $L_2$ or $L_3$, then there is nothing left to show. Therefore, assume that the edges in $L_2$ are incident to one blue vertex, and only the tails of forces in $L_3$ are blue. 

Let $L'\subset L$ be a set of $\ell-1$ leaks. Notice that blue vertices in $L_1\cap L'$,  blue vertices incident to edges in $L_2\cap L'$, and the tails of specified leaks in $L_3\cap L'$ can be deleted.  Let $A$ be the set of these vertices. 

Consider $G^*=G-A$. Since $B$ is an $(\ell-1)$-leaky forcing set, it follows that $B^*=B'\setminus A$ is a zero forcing set for $G^*$. At this point there is at most one leak from $L$ in $G^*$. Let $v\in V(G^*)\setminus B^*$. By assumption there exists $x\fs v, y\fs v \in \calf_A(B)$ in $G$ with $y\neq x$. Since $x,y\notin A$, it follows that $x,y \in V(G^*)$ and $x\fs v, y\fs v \in \calf(B^*).$ By Theorems \ref{setofforces}, \ref{edgesetofforces}, and \ref{specsetofforces}, $B^*$ is a $1$-edge-leaky forcing set, a $1$-leaky forcing set, and a specified $1$-leaky forcing set. Thus, $B^*$ can color $G^*$ blue, demonstrating that $B$ is a mixed $\ell$-leaky forcing set. 

To prove the contrapositive of the forward direction, assume $B$ is an $(\ell-1)$-leaky forcing set, $L=\{x_1,\dots,x_{\ell-1}\}$ is a set of vertex leaks and $v\in V(G)\setminus B$ such that if $x\fs v, y\fs v \in \calf_L(B)$, then $x=y$. Since $B$ is an $(\ell-1)$-leaky forcing set, there exists $x_0\fs v\in \calf_L(B)$. Let $L'=L\cup \{x_0\}$ be a set of $\ell$ vertex leaks. Notice that $L'$ demonstrates that $B$ is not an $\ell$-leaky forcing set. Thus, $B$ is also not a mixed $\ell$-leaky forcing set.
\end{proof}

\begin{cor}
For any graph $G$ and $\ell\geq 0$, 
\[\Z_{(\ell)}(G)=\Z_{(\ell)}^m(G).\]
In particular, $B$ is an $\ell$-leaky forcing set if and only if $B$ is a mixed $\ell$-leaky forcing set.
\end{cor}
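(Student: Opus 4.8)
The plan is to prove the equivalence of mixed $\ell$-leaky forcing with ordinary $\ell$-leaky forcing by combining the characterization theorems already in hand. Since Theorems~\ref{setofforces}, \ref{edgesetofforces}, and \ref{specsetofforces} all show that their respective notions of $\ell$-leaky forcing are characterized by the \emph{same} condition, namely that $B$ is an $(\ell-1)$-leaky forcing set with a double-force property $x\fs v, y\fs v\in\calf_L(B)$ for every $(\ell-1)$-vertex-leak set $L$ and $v\in V(G)\setminus B$, it suffices to show that mixed $\ell$-leaky forcing is characterized by this same condition. So the core of the proof is a single new theorem: $B$ is a mixed $\ell$-leaky forcing set if and only if $B$ satisfies that condition. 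The corollary $\Z\sub\ell(G)=\Z\sub\ell^m(G)$ then follows immediately, since the minimum sizes of sets satisfying the same characterizing condition must coincide.

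For the main theorem I would proceed by induction on $\ell$, exactly mirroring the proof of Theorem~\ref{edgesetofforces}. The base case $\ell=1$ is handled by any of Theorem~\ref{setofforces}, \ref{edgedouble}, or \ref{specsetofforces} (with a word about why a single mixed leak is no worse than a single vertex leak). For the inductive step, assume $B$ is an $(\ell-1)$-leaky forcing set with the double-force property for all $(\ell-1)$-vertex-leak sets; by discarding one leak this also holds at level $\ell-2$, so the induction hypothesis gives that $B$ is a mixed $(\ell-1)$-leaky forcing set. Now fix a set $L=L_1\cup L_2\cup L_3$ of $\ell$ leaks. The key tool is Lemma~\ref{mixfail}: it lets me run forces one-by-one until every vertex leak in $L_1$ is blue, every edge leak in $L_2$ has a blue endpoint, and every tail of a specified leak in $L_3$ is blue, reaching a blue set $B'$ (which is still a mixed $(\ell-1)$-leaky forcing set since $B\subseteq B'$). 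If at this point some edge leak in $L_2$ is fully blue or some specified leak in $L_3$ has a blue head, that leak is inert and I am done. Otherwise I delete a set $A$ of $\ell-1$ vertices — one per leak in a chosen $(\ell-1)$-subset $L'\subseteq L$: the blue vertex-leaks, one blue endpoint per edge leak, and the blue tail per specified leak — forming $G^*=G-A$ with $B^*=B'\setminus A$ a zero forcing set of $G^*$ (since $B$ is $(\ell-1)$-leaky forcing and $A$ arose only from leak-blocked vertices). At most one leak of $L$ survives in $G^*$; for any white $v\in V(G^*)\setminus B^*$ the double-force hypothesis supplies $x\fs v, y\fs v\in\calf_A(B)$ with $x\ne y$, and since $x,y$ are white-neighbors of $v$ neither lies in $A$, so these forces live in $G^*$ and witness that $B^*$ is a $1$-leaky/$1$-edge-leaky/specified $1$-leaky set by the base-case theorems. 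Hence $B^*$ colors $G^*$ blue and $B$ is a mixed $\ell$-leaky forcing set. The reverse (contrapositive) direction is easy: if the double-force condition fails, a set of $\ell$ vertex leaks already defeats $B$ (by Theorem~\ref{setofforces}), and vertex leaks are a special case of mixed leaks.

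The main obstacle, as in Theorem~\ref{edgesetofforces}, is the bookkeeping in the deletion step: I must be careful that the vertices placed into $A$ really can be removed without harming the forcing process — specifically that each such vertex either is a vertex leak that has gone blue, or is a blue endpoint of an edge leak whose partner is still white, or is a blue tail of a specified leak whose head is still white, and that $|A|\le \ell-1$ so that $B^*$ remains a zero forcing set of $G^*$ by the $(\ell-1)$-leaky hypothesis. I also need the observation — justified exactly as in the vertex and edge cases — that if any leak vertex in $B'$ had two or more white neighbors, we could drop that leak and contradict $B$ being an $(\ell-1)$-leaky forcing set, so each leak in the chosen subset contributes exactly one relevant vertex to $A$. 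None of this is deep, but it must be stated cleanly; the genuinely new content is packaged entirely inside Lemma~\ref{mixfail}, which has already been proven, so the theorem is essentially a careful reassembly of the three single-type arguments.
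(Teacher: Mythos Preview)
Your proposal is correct and takes essentially the same approach as the paper. The paper proves precisely the theorem you outline---that mixed $\ell$-leaky forcing is characterized by the same double-force condition as ordinary $\ell$-leaky forcing---via the same induction on $\ell$: invoke Lemma~\ref{mixfail} to reach a blue set $B'$ in which every leak is ``touched,'' choose an $(\ell-1)$-subset $L'\subset L$, delete the associated set $A$ of at most $\ell-1$ blue vertices, observe that $B^*=B'\setminus A$ is a zero forcing set of $G^*=G-A$ with at most one surviving leak, and finish with the $1$-leak base case; the corollary then follows immediately from Theorem~\ref{setofforces}.
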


\section{Independent sets of specified leaks} \label{secind}

Corollaries \ref{edgefund} and \ref{specfund} suggest that the strength of a set of specified leaks is somewhat independent of the number of leaks or their relative layout within the graph. In particular, a set of $\ell$ vertex leaks or $\ell$ edge leaks is at most as strong as a set of $\ell$ specified leaks even though $\ell$ vertex or edge leaks corresponds to more than $\ell$ specified leaks. Thus, arranging specified leaks into sets of out-stars or $2$-cycles is in some sense inefficient. The goal of this section is to formally develop what it means for the shape of a set of specified leaks to be irrelevant. In particular,  arranging specified leaks into out-stars or $2$-cycles is not an efficient use of leaks. 

Let $L$ be a set of specified leaks on $V(G)$. Notice that a specified leak $v\fs u$ can be thought of as a directed edge from $v$ to $u$. Therefore, $L$ naturally corresponds to the edge set of a directed graph on the vertex set $V(G)$. This gives rise to a notion of isomorphic sets of specified leaks. Let $L_1$ and $L_2$ be sets of specified leaks on $V(G)$. A set of specified leaks $L_1$ is \emph{isomorphic} to $L_2$ if there exists a bijection $\phi: V(G)\to V(G)$ such that $x\fs y\in L_1$ if and only if $\phi(x)\fs \phi(y) \in L_2$. 

A set $B$ is an $L$-leaky forcing set if $B$ can turn $G$ blue despite any set of $L_1$ leaks that is isomorphic to $L_2$ where $L_2\subseteq L$. Correspondingly, the $L$-leaky forcing number of $G$, denoted $\Z\sub L(G)$, is the size of the smallest $L$-leaky forcing set.  It is implicitly assumed that $L$ is a set of specified leaks on $V(G)$.

A set of specified leaks $L$ is a \emph{set of independent leaks} if for all $x\fs y, v\fs u\in L$, it follows that $x\neq v$ and $y\neq v$. Equivalently, $L$ is independent if $|T(L)|=|L|$ and $T(L)\cap H(L)=\varnothing$. Let $I(L)$ denote the size of the largest  set of independent leaks contained by  $L$. 

These definitions let us abstract away from the specific shape of a set of specified leaks and focus on the parameter that seems to matter. In particular, a set of specified leaks $L$ is no stronger than the a maximum set of independent leaks contained in $L$.

\begin{thm}\label{ind}
Let $L$ be a set of specified leaks on $V(G)$ and let $\ell= I(L)$. A specified $\ell$-leaky forcing set $B$ is an $L$-leaky forcing set. That is,
\[\Z\sub L(G)\leq \Z^s\sub\ell(G).\]
\end{thm}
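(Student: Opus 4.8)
The plan is to show that any specified $\ell$-leaky forcing set $B$, where $\ell = I(L)$, can survive any set of leaks isomorphic to a subset $L_2 \subseteq L$. Since a specified $\ell$-leaky forcing set and an $\ell$-leaky forcing set are the same thing by Corollary \ref{specfund}, the essential content is to reduce an arbitrary set of leaks isomorphic to some $L_2 \subseteq L$ to a set of at most $\ell$ \emph{independent} leaks that controls the forcing process, and then invoke the characterization in Theorem \ref{specsetofforces}.

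First I would fix a set $L_1$ of leaks isomorphic to some $L_2 \subseteq L$ and let $B$ be a specified $\ell$-leaky forcing set; I want to show $B^{[\infty]}_{L_1} = V(G)$. The key structural observation is that under isomorphism the quantity $I(L_1) = I(L_2) \leq I(L) = \ell$, so $L_1$ contains no set of more than $\ell$ independent leaks. Next I would run the forcing process with leaks $L_1$ until it stalls, obtaining $B' = B^{[\infty]}_{L_1}$, and suppose for contradiction $B' \subsetneq V(G)$. Every stalled blue vertex with exactly one white neighbor must be prohibited by a leak in $L_1$; I would collect from $L_1$ the leaks $x \fs y$ that are actually "active" at the stall, i.e., $x \in B'$ and $y \notin B'$. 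Among these active leaks, the tails all lie in $B'$ and the heads all lie in $V(G) \setminus B'$, so $T \cap H = \varnothing$ automatically. The remaining issue is whether two active leaks can share a tail: if $x \fs y_1$ and $x \fs y_2$ are both active with $y_1 \neq y_2$, then $x$ has two white neighbors, so neither leak is actually stopping anything — in that case $x$ could force (it is stalled only because it has $\geq 2$ white neighbors, which is the ordinary zero-forcing stall, not a leak effect). So I can prune to a set of active leaks with distinct tails; this set is independent and hence has size at most $\ell = I(L)$.

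Now I would apply Proposition \ref{specfail} (or really the logic behind it): the pruned set of at most $\ell$ truly-active specified leaks, being independent, has tail set of size at most $\ell$, and these tails form a set of at most $\ell$ vertex leaks that stall $B$. But actually the cleaner route is: this shows $B$ fails against a set of at most $\ell$ specified leaks, contradicting that $B$ is a specified $\ell$-leaky forcing set. More carefully, I would argue that if the pruned active set $L^*$ has $|L^*| = \ell' \le \ell$, then $L^*$ itself (together with any dummy leaks padded in to reach size $\ell$, placed on already-blue or already-finished vertices) witnesses that $B$ is not a specified $\ell$-leaky forcing set, since with leaks $L^*$ the process stalls at $B' \subsetneq V(G)$ exactly as it did with $L_1$. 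This is the contradiction, so $B^{[\infty]}_{L_1} = V(G)$, proving $B$ is an $L$-leaky forcing set and hence $\Z\sub L(G) \le \Z^s\sub\ell(G)$.

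The main obstacle I anticipate is the pruning step: justifying rigorously that deleting a leak with a shared tail (or deleting any leak whose tail has $\geq 2$ white neighbors at the stall) does not change $B^{[\infty]}$. The point is that such a leak never mattered — its tail was never going to force anyway at that moment — but one must be careful that after removing it and continuing the process the stall configuration is genuinely the same; this is precisely the style of argument used in Lemma \ref{edgefail}, Proposition \ref{specfail}, and Lemma \ref{mixfail}, so I would mirror that bookkeeping: partition $L_1$ into the leaks that "touch" $B^{[\infty]}_{L_1}$ appropriately versus those that don't, observe the latter are irrelevant, and observe that among the relevant ones a shared tail means that tail has multiple white neighbors so that coordinate is irrelevant too. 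Padding the pruned independent set back up to size exactly $\ell$ with harmless leaks (e.g., leaks whose tail lies outside $B'$ or is already fully forced) is a minor technical point but should be spelled out so the appeal to "$B$ is a specified $\ell$-leaky forcing set" is literally valid.
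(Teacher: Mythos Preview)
Your proposal is correct and follows essentially the same route as the paper: run the process to a stall, extract the leaks that are actually blocking progress, observe they form an independent set of size at most $I(L)=\ell$, and conclude that $B$ fails against $\le\ell$ specified leaks. The paper avoids your pruning and padding worries by defining the \emph{active leaks} as $A(B',L_1)=\{x\fs y\in L_1: x\in B',\ \{y\}=N(x)\setminus B'\}$, which builds in the ``unique white neighbor'' condition so that independence is automatic; since a specified $\ell$-leaky forcing set trivially survives any $\le\ell$ specified leaks, no padding is needed either.
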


To prove Theorem \ref{ind}, consider \emph{active leaks}. The set of active leaks given a blue set $B$ and a set of specified leaks $L$ is the set of leaks in $L$ that actively prevents $B$ from performing a force. Formally the set of active leaks is given by 
\[A(B,L)=\{x\fs y\in L: x\in B, \{y\}= N(x)\setminus B\}.\]

\begin{proof}
First, consider the contrapositive of the desired result. Suppose that $B$ is not an $L$-leaky forcing set. Therefore, there exists a set of specified leaks $L'$ which is isomorphic to a subset of $L$ that prevents $B$ from coloring all of $G$ blue. Let $\binf_{L'}$ be the set of blue vertices obtained from $B$ given $L'$ by exhaustively applying forces. Notice that $\binf_{L'}\neq V(G)$, and let $A=A(\binf _{L'}, L')$. Since $A$ is a set of independent leaks, it follows that
\[ |A|\leq I(L')\leq I(L).\]
Furthermore, $A$ demonstrates that $B$ is not a specified $\ell$-leaky forcing set.
\end{proof}

The converse of Theorem \ref{ind} holds when $I(L)=1$. 

\begin{prop}\label{inddouble}
Let $L$ be a set of specified leaks on $V(G)$ and let $1= I(L)$. A set $B$ is an $L$-leaky forcing set if and only if $B$ is a specified $1$-leaky forcing set. 
\end{prop}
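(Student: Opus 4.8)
The plan is to prove Proposition \ref{inddouble} by combining Theorem \ref{ind} (which gives one direction for free) with a short argument for the converse that leverages Theorem \ref{specsetofforces} and Corollary \ref{specfund}. Since $I(L) = 1$, Theorem \ref{ind} immediately tells us that any specified $1$-leaky forcing set is an $L$-leaky forcing set. So the entire content is the reverse implication: if $B$ is an $L$-leaky forcing set, then $B$ is a specified $1$-leaky forcing set.

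For the converse, I would argue the contrapositive. Suppose $B$ is not a specified $1$-leaky forcing set. By Corollary \ref{specfund} (or directly by Theorem \ref{specsetofforces} with $\ell = 1$, whose base case is Theorem \ref{edgedouble}), this means $B$ fails the ``double force'' condition: either $B$ is not a zero forcing set, or there exists $v \in V(G) \setminus B$ such that every force into $v$ available in $\calf(B)$ has the same tail $x$. In the first case, a single arbitrary specified leak (or even none) already prevents $B$ from forcing $G$, and since $L$ is nonempty — note $I(L) = 1$ forces $|L| \geq 1$ — any leak in $L$ is isomorphic to a one-element subset of $L$, so $B$ is not an $L$-leaky forcing set. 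In the second case, the single specified leak $x \fs v$ defeats $B$: after exhaustively forcing, $v$ never turns blue because the only force that could reach it is prohibited. The key point is that the singleton set $\{x \fs v\}$ is a set of independent leaks with $I(\{x\fs v\}) = 1$, hence it is isomorphic to some subset of $L$ (any one-edge subset of $L$ works, since all one-edge directed graphs on $V(G)$ are isomorphic via a suitable vertex bijection). Therefore $B$ is not an $L$-leaky forcing set, completing the contrapositive.

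The only subtlety to handle carefully is the isomorphism bookkeeping: I must confirm that a single specified leak $x \fs v$ is genuinely isomorphic to a subset of $L$. Since $I(L) = 1$, $L$ contains at least one leak $a \fs b$, and the subset $\{a \fs b\} \subseteq L$ is isomorphic to $\{x \fs v\}$ provided $a \neq b$ and $x \neq v$ — both hold, the former because $\{a\fs b\}$ being a set of independent leaks of size $1$ requires $a \notin \{b\}$ (or more simply, a loop $a \fs a$ is not a valid specified leak since it requires $a \in N(a)$), the latter because $x$ forces $v$ so $v \in N(x)$ and $v \neq x$. A bijection $\phi$ with $\phi(x) = a$, $\phi(v) = b$, extended arbitrarily, witnesses the isomorphism. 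I expect this isomorphism-matching step to be the main (though minor) obstacle, essentially a matter of stating it cleanly; the rest is a direct appeal to the already-established equivalence between specified $1$-leaky forcing and the double-force condition.

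\begin{proof}
The forward direction is the contrapositive of Theorem \ref{ind} with $\ell = I(L) = 1$: every specified $1$-leaky forcing set is an $L$-leaky forcing set.

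For the reverse direction, assume $B$ is not a specified $1$-leaky forcing set; we show $B$ is not an $L$-leaky forcing set. By Theorem \ref{specsetofforces} (base case $\ell = 1$, i.e.\ Theorem \ref{edgedouble}), either $B$ is not a zero forcing set of $G$, or there exists $v \in V(G)\setminus B$ such that $x\fs v, y\fs v \in \calf(B)$ implies $x = y$. Since $I(L) = 1$, the set $L$ is nonempty; fix $a\fs b \in L$. Note $b \in N(a)$ and $b \neq a$.

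If $B$ is not a zero forcing set, then $B$ cannot color $G$ blue even with the single specified leak $a\fs b$ (which is isomorphic to the subset $\{a\fs b\} \subseteq L$ via the identity), so $B$ is not an $L$-leaky forcing set.

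Otherwise, fix $v$ as above. Since $B$ is a zero forcing set, there is a force $x\fs v \in \calf(B)$, and by choice of $v$ this is the only tail that can force $v$. Then $v \in N(x)$ and $v \neq x$, so the bijection $\phi\colon V(G)\to V(G)$ with $\phi(x) = a$, $\phi(v) = b$ (and otherwise arbitrary) witnesses that $\{x\fs v\}$ is isomorphic to $\{a\fs b\}\subseteq L$. With $\{x\fs v\}$ imposed as the set of leaks, exhaustively applying the forcing rule never colors $v$ blue: any force into $v$ would have tail $x$, which is prohibited. Hence $B$ is not an $L$-leaky forcing set.
\end{proof}
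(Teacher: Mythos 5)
Your proof is correct and follows essentially the same route as the paper: one direction is Theorem \ref{ind} with $\ell=1$, and the other produces a single defeating specified leak and notes that any singleton leak is isomorphic to a one-element subset of $L$. The only difference is cosmetic — you detour through Theorem \ref{specsetofforces}/\ref{edgedouble} to extract the leak $x\fs v$, whereas the paper reads it directly off the definition of failing to be a specified $1$-leaky forcing set — and your explicit isomorphism bookkeeping and handling of the non-zero-forcing-set case are fine.
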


\begin{proof}
The backward direction Proposition \ref{inddouble} is covered by Theorem \ref{ind}. Therefore, assume that $B$ is not a specified $1$-leaky forcing set. This implies that there exists $L'=\{x\fs y\}$ that stops $B$ from turning $G$ blue. By assumption, $L$ has a set of independent leaks of size $1$. Therefore, $L'$ is isomorphic to a subset of $L$. Therefore, $B$ is not an $L$-leaky forcing set.
\end{proof}

The proof of Proposition \ref{inddouble} relies on the fact that, up to isomorphism, there is only one set of independent leaks. Proving the converse of Theorem \ref{ind} fails since an arbitrary set of $\ell$ independent leaks cannot always be injected into $L$ when $I(L)=\ell$. To illustrate this point, consider the following example. Let $G=K_{\ell+1}\square K_2$, $\ell\geq 2$ with vertex set $V(G)=\{x_1,\dots, x_{\ell+1}, y_1,\dots, y_{\ell+1}\}$ where the sets $\{x_i:1\leq i\leq \ell+1\}, \{y_i: 1\leq i\leq \ell+1\}$ induce cliques, and $\{x_iy_i: 1\leq i\leq \ell+1\}$ induces a matching. Let $L_1=\{x_i\fs y_i: 1\leq i\leq \ell+1\}$, and $L_2=\{x_i\fs x_{\ell+1}: 1\leq i \leq \ell\}$. Suppose that $B=\{x_i: 1\leq i\leq \ell+1\}$. First, notice that $B$ is not a specified $2$-leaky forcing set, since $L=\{x_1\fs y_1, x_2\fs y_2\}$ prevents $B$ from turning $y_1,y_2$ blue. This also shows that $B$ is not an $L_2$-leaky forcing set. However, $B$ is an $L_1$ leaky forcing set. Since $I(L_1)=I(L_2)=\ell$, this example shows that the converse of Theorem \ref{ind} is false for $\ell\geq 2$.

\section{Closing remarks}

Though leaky forcing is a natural generalization of zero forcing, its relationship to the linear algebra roots of zero forcing is less clear. Consider the following system:

\begin{align*}
a_{1,1}x_1+a_{1,2}x_2+a_{1,3}x_3+a_{1,4}x_4&=0\\
a_{2,1}x_1+a_{2,2}x_2+a_{2,3}x_3+0x_4&=0\\
a_{3,1}x_1+a_{3,2}x_2+a_{3,3}x_3+0x_4&=0\\
a_{4,1}x_1+0x_2+0x_3+a_{4,4}x_4&=0
\end{align*}

or equivalently,

\[\begin{pmatrix}
a_{1,1}& a_{1,2}& a_{1,3}&a_{1,4}\\
a_{2,1}& a_{2,2}& a_{2,3}& 0 \\
a_{3,1}& a_{3,2}& a_{3,3}& 0\\
a_{4,1}& 0& 0&a_{4,4}
\end{pmatrix}
\begin{pmatrix}
x_1\\
x_2\\
x_3\\
x_4
\end{pmatrix}=
\begin{pmatrix}
0\\0\\0\\0
\end{pmatrix}.\]
Here it is required that $a_{i,j}=a_{j,i}\neq 0$ for $i\neq j$. Zero forcing studies the minimum number of entries in the $x$ vector that need to be set to $0$ before one can conclude that the whole $x$ vector is identically $0$. In particular, determining that  $x_2=x_4=0$ implies $x_1=x_3=0$ is equivalent to seeing that $\{x_2,x_4\}$ is zero forcing set in Figure \ref{paw}. In this setting $x_4x_1$ as an edge leak corresponds to assuming that $a_{1,4}$ and $a_{4,1}$ are zero divisors. That is, if $a_{4,1}$ is a zero divisor and $x_4=0$, then $a_{4,1}x_1+a_{4,4}x_4=0$ does not imply that $x_1=0$. Equivalently, if $x_1x_4$ is an edge leak, then $x_1$ cannot be used to turn $x_4$ blue.

 \begin{figure}[H]
        \centering{
        \includegraphics[width=0.15\textwidth]{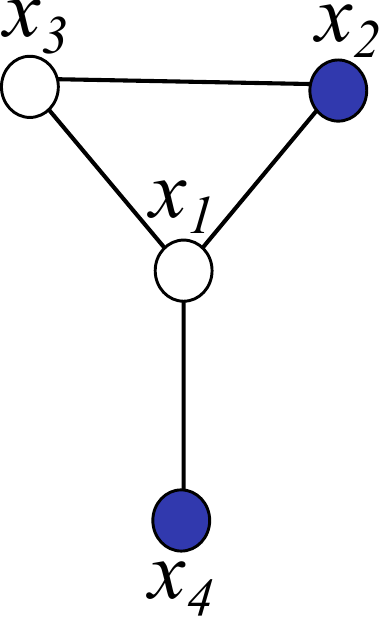}
        \caption{The paw graph with a corresponding zero forcing set.}
        \label{paw}}
    \end{figure}

Let $G$ be an arbitrary graph and $A$ a symmetric matrix with nonzero pattern corresponding to $G$ with $2\ell$ zero divisors in the off-diagonal entries ($\ell$ zero divisors in the upper off-diagonal entries). Under the interpretation in the previous paragraph, $\Z'\sub\ell (G)$ corresponds to the minimum number of $0$ entries in $x$ that force $x=0$ vector under the condition that $Ax=0$.

Up to this point, the authors are unaware of generalizations of the notions dimension, rank, and spectrum for modules over rings with zero divisors and linear transformations thereof. As a general problem, and a curiosity well beyond the scope of this paper, the authors would be very interested to see if the edge-leaky forcing number can be used as a tool to analyze some appropriate notion of minimum rank or maximum nullity for $R$-linear transformations (homomorphisms) of modules $M$  over ring $R$ with zero divisors. 

\subsection*{Acknowledgements}

The authors would like to thank Nathan Warnberg for insightful discussions and feedback. This material is based upon work supported by the National Science Foundation under Grant Numbers DMS-1839918 and DMS-1719841.

\end{document}